\begin{document}

\maketitle
\pub{Received (received date)}{Revised (revised date)}{Communicated by (Name)}

\begin{abstract}
	We consider the topological and geometric reconstruction of a geodesic subspace
of $\R^N$ both from the \v{C}ech and Vietoris-Rips filtrations on a finite,
Hausdorff-close, Euclidean sample. Our reconstruction technique leverages the
intrinsic length metric induced by the geodesics on the subspace. We consider
the distortion and convexity radius as our sampling parameters for the
reconstruction problem. For a geodesic subspace with finite distortion and
positive convexity radius, we guarantee a correct computation of its homotopy
and homology groups from the sample. This technique provides alternative
sampling conditions to the existing and commonly used conditions based on weak
feature size and $\mu$--reach, and performs better under certain types of
perturbations of the geodesic subspace. For geodesic subspaces of $\R^2$, we
also devise an algorithm to output a homotopy equivalent geometric complex that
has a very small Hausdorff distance to the unknown underlying~space. 
   
   \keywords{Vietoris-Rips complex, Geodesic spaces, Shape reconstruction, Map 
       construction}

\end{abstract}

\section{Introduction}\label{sec:intro}
With the advent of modern sampling technologies, such as GPS, sensors, medical
imaging, etc., Euclidean point-clouds are becoming widely available for
analysis. In the last decade, the problem of reconstructing an (unknown)
Euclidean shape, from a (noisy) sample around it, has received a far and wide
attention both in theoretical and applied literature; see
\cite{amenta1998crust,Dey:2006:CSR:1196751,SMALE,CHAZALSTAB,co-tpbr-08,chazal2009sampling}.
The nature of such a reconstruction attempt can commonly be classified as being
\emph{topological} or \emph{geometric}. A topological reconstruction is usually
attributed to inferring significant topological features---such as homology and
homotopy groups---of the hidden shape of interest. To be more specific, one may
also say {\em homological} reconstruction or {\em homotopy type}
reconstruction. A much stronger paradigm is the {\em geometric} reconstruction,
where one is interested in producing, from the sample, a Euclidean subset that
is homotopy equivalent and geometrically ``close'' (e.g., in Hausdorff distance)
to the underlying shape.

The nature of the problem and the techniques of the solution change depending on
the type of the shape $X$ and the sample $S$ considered, as well as how their
``closeness'' is measured. The most natural distance measure between two
abstract metric spaces is the Gromov-Hausdorff distance, which measures how
``metrically close'' two metric spaces are. The reconstruction of a geodesic
metric space $X$ from another metric space $S$ that is Gromov-Hausdorff close
to $X$ is considered in \cite{latschev_2001,Chazal2015}. For a Euclidean shape
$X$ and a Euclidean sample $S$, however, the sample density is usually
quantified by their Hausdorff distance. For the Hausdorff-type reconstruction of
Euclidean shapes, see \cite{SMALE,co-tpbr-08,chazal2009sampling,CHAZALSTAB}.

In many applications, a point cloud approximates a geodesic subspace (see
\defref{geodesic}) of Euclidean space. Examples include GPS trajectories
sampled around a road-network (modeled as sampling paths in a graph in $\R^2$), earthquake data
sampled around the filamentary trajectory of the shock, or~3D medical imaging.
The intrinsic geodesics of these underlying shapes enjoy a rich geometric structure.
Capturing that structure from the sampled data is the challenge.
The
length metric $d_L$ (see \eqnref{lengthmetric}) turns them into
geodesic subspaces of $\R^N$. In this work, we consider both topological and
geometric reconstruction of a geodesic subspace $X$ of $\R^N$ from a finite
Hausdorff-close Euclidean~sample.

In shape reconstruction, the use of various simplicial complexes built on the
point-clouds is becoming increasingly popular; see for example
\cite{DeSilva:2004,ATTALI2013448,Adams_2019,co-tpbr-08,kim2020homotopy}. The
most common of them are Vietoris-Rips and \v{C}ech complexes. In this work, we
use filtrations of both of them, and we recognize the \bemph{distortion}
$\delta=\delta(X)$ and \bemph{convexity radius} $\rho=\rho(X)$ of $X$ to be
natural sampling parameters when the geodesic subspaces of $\R^N$ are
considered; see \secref{preliminaries} for their formal definitions.

Our homological reconstruction approach is similar to \cite{co-tpbr-08}, which
is based on the weak feature size (\wfs) of the underlying space. However, the
use of partition of unity, for example, in the proof of
\thmref{Cech-persistence} makes our techniques substantially different. The
novelty of this paper is discerned by the introduction of distortion and
convexity radius as sampling parameters, which is not related to the known
sampling parameters such as the reach,~$\mu$--reach or \wfs
\cite{chazal2009sampling, CHAZALSTAB, ATTALI2013448}. These works are based on
an analysis of the gradient flow of the Euclidean distance function to $X$ in
$\R^N$ and its critical points. Our techniques are substantially different from
that and our results apply to a large class of spaces including smooth
submanifolds of~$\R^N$, finite embedded graphs and higher dimensional simplicial
complexes. As an application of our reconstruction technique, we develop in
\secref{geom-recon} a new topological approach for the reconstruction of
embedded graphs.

\subsection{Review of Related Works}\label{subsec:related}
This subsection surveys relevant and pivotal results in shape reconstruction
from point clouds using topological methods, and compares them to the results of
this paper. \tabref{compare} presents a list of some of the most related results
alongside the contribution presented in this work. For necessary definitions and
background we refer the reader to \secref{preliminaries}.

\newcommand{\cellA}[1]{\makecell*[t{p{1in}}]{#1}}
\newcommand{\cellB}[1]{\makecell*[t{p{1.24in}}]{#1}}
\newcommand{\cellC}[1]{\makecell*[t{p{.5in}}]{#1}}
\newcommand{\cellD}[1]{\makecell*[t{p{1.9in}}]{#1}}
\newcommand{\cellE}[1]{\makecell*[t{p{1.5in}}]{#1}}

\renewcommand{\arraystretch}{1.2}

\begin{table}[tbh]
	\caption[Reconstruction Results]{Reconstruction results. Parameters (params.)
		are: weak feature size
		(\wfs), $\mu$-reach ($R$), shorted edge length ($b$), global
                reach~($\xi$),
		smallest turning angle ($\alpha$), distortion ($\delta$), and convexity
		radius ($\rho$).}\label{tab:compare}
	\centering

	{\small
	\begin{tabular}[t]{|p{0.65in}|p{0.8in}|p{.5in}|p{1.5in}|p{1.5in}|}
		\toprule
		{\bf Authors} & {\bf Space} $X$ & {\bf Param.} & {\bf
			Condition on} $S$
		& {\bf Result}\\
		\colrule
		\cellA{Niyogi \\et al.~\cite{SMALE}} &\cellB{manifolds} &\cellC{$\xi$}
		&\cellD{$\eps<\sqrt{\frac{3}{5}}\xi$ and $S\subset X$ \\is
			$\frac{\eps}{2}$-dense} &\cellE{$S^\eps$ deformation \\retracts to~$X$} \\
		\colrule
		\cellA{Chazal, \\Lieutier~\cite{CHAZALSTAB}}
		&\cellB{compact sets}
		&\cellC{\wfs}
		&\cellD{$d_H(X,S)<\eps<\frac{\wfs(X)}{4}$}
		&\cellE{$Im(i_*)\simeq H_*(X^\alpha)$, where\\
			$i:S^\eps\to S^{3\eps}$ and $\alpha$ is \\sufficiently~small}\\
		\colrule
		\cellA{Chazal, \\Oudot~\cite{co-tpbr-08}}
		&\cellB{compact sets}
		&\cellC{\wfs}
		&\cellD{$d_H(X,S)<\eps<\frac{1}{9}\wfs(X)$,\\ $S$ is finite}
		&\cellE{$Im(i_*)\simeq
			H_*(X^\alpha)$, where\\ $i:\Ri_\eps(S)\to\Ri_{4\eps}(S)$, $\alpha$ \\is
			sufficiently small}\\
		\colrule
		\cellA{Attali \\et al.~\cite{ATTALI2013448}}
		&\cellB{compact sets}
		&\cellC{$\mu$-reach $R$}
		&\cellD{$d_H(X,S)\leq\eps<\lambda^{\text{cech}}(\mu)R$}
		&\cellE{$\C_\alpha(S)$ is homotopy
			\\equivalent to \\$X^\eta$ for $\eta\in(0,R)$}\\
		\colrule
		\cellA{Anjaneya \\et al.~\cite{aanjaneya2012metric}}
		&\cellB{abstract metric \\graphs}
		&\cellC{$b,r$}
		&\cellD{$S$ is an \\$(\eps,R)$-approximation,\\
			$\frac{15\eps}{2} < b <
			\min{\left\{\frac{R}{4},\frac{3b-6\eps}{5}\right\}}$}
		&\cellE{homeomorphic graph}\\
		\colrule

		\cellA{Wasser\-man~\\et al.~\cite{lecci2014statistical}} &\cellB{embedded\\
			metric graphs} &\cellC{$\mu$ of each edge,\\ $\xi$, $\alpha$, $b$, $\tau$}
		&\cellD{$S$ is $\frac{\delta}{2}$-dense in \\$X^\alpha$,
			$0<r+\delta<\xi-2\sigma$, \\and $0<\delta <f(b,\alpha,\tau,\xi,\sigma)$}
		&\cellE{isomorphic pseudo-graph}\\
		\colrule
		\cellA{\thmref{Rips-persistence}}
		&\cellB{geodesic spaces}
		&\cellC{$\delta,\rho$}
		&\cellD{$d_H(X,S)<\frac{\eps}{4}<\frac{\rho}{2\delta(3\delta+2)}$}
		&\cellE{$Im(i_*)\simeq
			H_*(X)$, where $i:\Ri_\eps(S)\to\Ri_{\frac{1}{2}(3\delta+1)\eps}(S)$}\\
		\colrule
		\cellA{\thmref{planar-reconstruction}}
		&\cellB{planar \\subspaces}
		&\cellC{$\delta,\rho$}
		&\cellD{$d_H(X,S)<\frac{\eps}{3}<\frac{\rho}{\delta(15\delta+2)}$}
		&\cellE{Hausdorff-close, homotopy\\ equivalent subset}\\
		\botrule
	\end{tabular}
}
\end{table}

\paragraph{Reach.}
The most well-behaved spaces are smooth Euclidean submanifolds, more generally
spaces with a positive \emph{reach} $\mathsf{r}(X)$. In \cite{SMALE}, the
authors apply geometric and topological tools to reconstruct a smooth
submanifold by the union of Euclidean balls of sufficiently small radius around
a dense subset. The work uses the reach of the embedded submanifold as the
sampling parameter. In a more recent work (\cite{Kim2019}), the authors improve
some of the previously known bounds and develop homotopy-type reconstruction of
a Euclidean (compact) subset with positive reach (and $\mu$-reach) using
\v{C}ech and Vietoris-Rips complexes on a sample.

The above results do not apply when considering shapes beyond the class of
Euclidean submanifolds or spaces that do not have a positive reach, although
such shapes are frequently encountered in practical applications. A common
reason for a space to have a vanishing reach is the presence of sharp corners
and branchings. Such spaces include graphs, embedded simplicial complexes,
manifolds with corners---also the type of shapes we consider in this work for
reconstruction. For manifold reconstruction by Vietoris-Rips complexes in a
slightly different but related context, see
\cite{Adams_2019,adamaszek_homotopy_2016}.

\paragraph{Weak Feature Size, $\mu$-Reach.}
In developing a sampling theory for general compact sets in~$\R^N$, the notion
of \emph{weak feature size} (\wfs) was introduced in \cite{CHAZALSTAB} as the
infimum of the positive critical values of the distance function to the compact
set. Using the \wfs as a sampling condition, the authors developed a
\emph{persistence-based} approach to reconstruct the homology groups and the
fundamental group of a hidden shape from the Euclidean thickenings of the sample
around it.

The results have been further extended in \cite{co-tpbr-08} to facilitate
reconstruction of homology groups from \v{C}ech, Vietoris-Rips, and witness
complexes built on the sample. In comparison with the manifold reconstruction
result in \cite{SMALE}, the techniques of \cite{CHAZALSTAB,co-tpbr-08} apply to
much less regular subspaces of $\R^N$, such as compact Euclidean neighborhood
retracts \cite{Borsuk:1967, Dold:1995}---as long as they have a positive $\wfs$.

The notion of the \wfs of a Euclidean compact set was generalized in
\cite{chazal2009sampling} by introducing the concept of $\mu$-reach, denoted
$\mathsf{r}_\mu(X)$. A homotopy-type reconstruction of spaces with
positive~$\mu$-reach has been developed in \cite{chazal2009sampling,ATTALI2013448}.
Although these works consider for reconstruction spaces beyond the class of
positive \wfs, the difficulty lies in applying the results to shapes as simple
as an embedded tree. Also, choosing a suitable $\mu$ so that the $\mu$-reach is
positive is not always clear.
\begin{figure}[thb]
	\centering
	\includegraphics{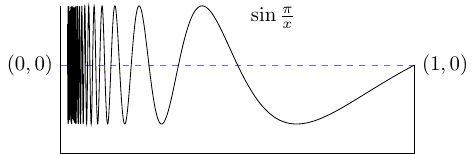}
	\caption{The compact set $X$ (\emph{Warsaw circle}) has a positive \wfs, but
		$X$ and $X^\lambda$ do not have the same homotopy type for any
		$\lambda>0$. In fact, $X$ has the weak homotopy type of a point, whereas
		$X^\lambda$ has the homotopy type of $S^1$. }\label{fig:thickening}
\end{figure}

Our topological reconstruction results (\thmref{Rips-persistence} and
\thmref{Cech-persistence}), are very similar in style to the results presented
in \cite{co-tpbr-08}. However, the use of partition of unity for \v{C}ech
complexes and homotopy equivalence result of Hausmann (\cite{hausmann_1995}) for
Vietoris-Rips complexes make our proofs very different. The \wfs-based
technique employed in \cite{CHAZALSTAB,co-tpbr-08} restricts their results to
work for homology with coefficients only in a field. Moreover, it's not
apparently clear whether the results can easily be extended to higher homotopy
groups. Our reconstruction results, however, do not suffer such restrictions; see
Remark \ref{rem:functor}.

Apart from the fact that we employ $\delta(X)$ and $\rho(X)$ for our sampling
condition, all \wfs (and $\mu$-reach) based results guarantee a reconstruction
of a thickening $X^\lambda$ of $X$ and not $X$ directly.
There are known pathological examples of spaces where the
thickening (however small) is not homotopy equivalent to the underlying space,
such as the {\em Warsaw circle} shown in Figure \ref{fig:thickening}. Although the
homological reconstruction results in our work concern the homological
reconstruction of the subspace~$X$ itself, not the thickening of $X$, they are
not strong enough to apply in the case of the Warsaw circle because of
$\delta(X)=+\infty$ in this case.

Another notable difference in the previously discussed approaches appears in the
cases where $X$ is ``slightly perturbed'', e.g., a submanifold with corners.
Such a perturbation is illustrated in Figure \ref{fig:zero-wfs} for a circle $X$
topologically embedded\footnote{a topological embedding is simply a
$C^0$--embedding.} in $\R^2$. The top part of the space $X$ is the graph of a
rectifiable curve~$\gamma:[0,1]\to \R^2$ such that, when restricted to the
segment $\left[\frac{1}{n+1},\frac{1}{n}\right]$, it is a half-circle of
diameter $\frac{1}{n(n+1)}$ for $n$ odd and a line-segment for $n$ even. For
this space, the set of critical points of the distance function is an infinite
set with an accumulation point at~$(0,0)$. Consequently, $\wfs(X)=0$. However,
$X$ has a finite distortion $\delta=\frac{\pi}{2}$ and a positive intrinsic
convexity radius: $\rho(X)>0$.
 \begin{figure}[thb]
 	\centering
 	\includegraphics[width=4in]{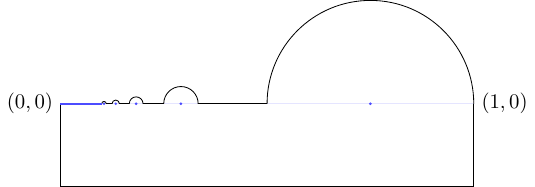}
 	\caption{The space $X$ is a compact Euclidean subspace with $\wfs(X)=0$ and
 		$\mathsf{r}_\mu(X)=0$. The critical points of the distance function are
 		shown in blue; they accumulate at $(0,0)$. However, $X$ has a finite
 		distortion and a positive convexity radius. \label{fig:zero-wfs}}
\end{figure}
Thus $X$ fails to satisfy the conditions of the reconstruction results of
\cite{CHAZALSTAB,co-tpbr-08}, however our results apply to this case. Another
important point, suggested by the example of Figure \ref{fig:zero-wfs}, is that
any embedded submanifold $X$ in $\R^N$ can be perturbed to a submanifold $X'$,
just by adding a small ``spherical cap'' at any of its points. Such a small
perturbation does not change the distortion and the convexity radius too much,
however can produce very small $\wfs$, because we introduce a critical point of
the distance function at the center of the cap. Small values of $\wfs$
result in large sample sizes needed for the reconstruction.

\paragraph{Metric Graph Reconstruction.}
We finish this introduction with a quick summary of some of the existing works
on reconstruction of embedded metric graphs
(\cite{aanjaneya2012metric,Lecci:2014:SAM:2627435.2697074,Chazal2015}).
In~\cite{aanjaneya2012metric}, the authors consider an abstract metric graph and a
sample that is close to it in Gromov-Hausdorff metric, and reconstruct the
structure of the metric graph along with the metric on it. In a more recent work
\cite{Lecci:2014:SAM:2627435.2697074}, the authors show a statistical treatment
of metric graph reconstruction. They consider an embedded metric graph and a
Euclidean sample around it. The Gromov-Hausdorff proximity used in
\cite{aanjaneya2012metric} is replaced by the density assumption. The algorithm
presented in \cite{aanjaneya2012metric} only reconstructs the connectivity of
the vertices of the underlying metric graph and outputs an isomorphic
pseudo-graph. And lastly, we mention that the
first Betti number of an abstract metric graph is computed by considering the
persistent cycles in the Vietoris-Rips complexes of a sample that is very close
to it, with respect to the Gromov-Hausdorff distance; see \cite[Lemma 6.1]{Chazal2015}. In Gromov-Hausdorff type
reconstruction schemes, a small Gromov-Hausdorff distance between the graph and
the sample guarantees a successful reconstruction. These methods are not a good
choice when embedded graphs in $\R^N$ are considered. For an embedded graph with
the induced length metric and a Euclidean sample around it, the Gromov-Hausdorff
distance is not guaranteed to be made infinitely small, even if a dense enough
sample is taken. Also, most of the above mentioned works may be insufficient to
give a geometrically close embedding for the reconstruction. Whereas our
technique, presented in \secref{geom-recon}, can successfully be used to
reconstruct embedded graphs; see \corref{graphs-reconstruction}.

\subsection{Our Contribution}
One of the major contributions of this work is to reconstruct geodesic subspaces
of $\R^N$, both topologically and geometrically. In our pursuit, we recognize
distortion and convexity radius as new sampling parameters. These sampling
parameters are very natural properties of geodesic spaces.

In \secref{preliminaries}, along with the other important notions of metric
geometry and algebraic topology that we use throughout this paper, we define
convexity radius and distortion of a geodesic space.

In \secref{topo-recon}, our main topological reconstruction results for a
geodesic subspace $X$ of~$\R^N$ are presented. When the distortion is finite and
the convexity radius is positive, the Vietoris-Rips and \v{C}ech filtrations of
the sample are shown to successfully compute the homology and homotopy groups of
$X$ (\thmref{Rips-persistence} and \thmref{Cech-persistence}).

In \secref{geom-recon}, we consider geometric reconstruction of geodesic
subspaces. We construct a complex on the sample as our geometric reconstruction
of the space of interest. \thmref{fundamental} establishes the isomorphism of
their fundamental groups. As an interesting application in \subsecref{graphs},
we consider the geometric reconstruction of planar subspaces and embedded planar
graphs (\defref{metric-graph}) in particular. In \thmref{planar-reconstruction},
we compute a homotopy equivalent geometric complex in the same ambient space
that is also Hausdorff-close to $X$. Since the sample $S$ can be taken to be
finite, our result gives rise to an efficient algorithm (\algref{graph}) for the
geometric reconstruction of planar embedded graphs.

\section{Notation and Background}\label{sec:preliminaries}\label{sec:background}
In this section, we provide a brief overview of useful notation and classical results from
metric geometry and algebraic topology. For more
detailed and complete treatment, we refer the reader to textbooks on metric
geometry~\cite{burago2001course,gromov1999metric} and algebraic
topology~\cite{Kozlov-book,MUNK,spanier1994algebraic}.

\subsection{Geodesic Subspaces, Distortion, Convexity Radius}
We first present relevant definitions from metric geometry.

\paragraph{Geodesic Subspaces (of $\R^N$)}
We start with the unit interval $I:=[0,1] \subset \R$. A continuous function
$\gamma\colon I \to \R^N$ is called a path. We call $T=\{t_i\}_{i=0}^k$
a discretization of $I$
if~$0=t_0 < t_1 < t_2 < \ldots < t_k=1$. We create a piecewise linear path
by using straight line segments to connect $\gamma(t_i)$ with $\gamma(t_{i+1})$
for each $i \in \{0,1,\ldots, k-1\}$.
We often equip~$\R^N$ with the Euclidean, or $L_2$
distance, $d_2 \colon \R^N \times \R^N \to \R$ defined by~$d_2(x,y) :=
\norm{x-y}_2$. Let~$\gamma \colon I \to \R^N$ be a (continuous) path.
The
\emph{length} of $\gamma$ is defined as:
\begin{equation*}
    L(\gamma):=\sup_{T} \sum_{i \in \{1,2, \ldots, |T| \}}
            d_2\left(\gamma(t_i),\gamma(t_{i+1})\right),
\end{equation*}
where the supremum is taken over all finite discretizations of $I$.
Furthermore, the curve~$\gamma$
is called \emph{rectifiable} if $L(\gamma)$ is finite.
For a path-connected subset~$X\subseteq\R^N$, we call the restriction of $d_2$
to  $X$ the \emph{restricted metric} on~$X$. We define the \emph{induced length
metric} or \emph{geodesic metric}, $d_L \colon X \times X \to R$,~by
\begin{equation}\label{eqn:lengthmetric}
    d_L(x,y)=\inf\limits_{\gamma:[0,1]\to X} L(\gamma),
\end{equation}
where the infimum is
taken over all paths $\gamma:I\to X$ such that $\gamma(0)=x$ and $\gamma(1)=y$.
\begin{definition}[Geodesic Subspace]\label{def:geodesic} We call
	$X\subseteq\R^N$ a \emph{geodesic subspace} if between any pair of
	points~$x,y\in X$, there exists a rectifiable path on $X$ starting at $x$
	and ending at~$y$ whose length is~$d_L(x,y)$.
\end{definition}

One example of a geodesic subspace is a connected and compact subset of $\R^N$.
The ``niceness'' of an geodesic subspace is quantified by its distortion, a
concept first introduced by M.~Gromov in the
context of knots on Riemannian manifolds~\cite{gromov1978,gromov1983,gromov1999metric}.
For a geodesic subspace~$X\subseteq\R^N$, we consider the map~$f:(X,d_2)\to(X,d_L)$ induced by the
identiy map on $X$.
The distortion of $X$ is the best Lipschitz constant for $f$. More
formally, we have the following definition.
\begin{definition}[Distortion]
    The \emph{distortion} of the induced length metric $d_L$ with respect to
    Euclidean distance over a set $X\subseteq\R^N$ is defined as:
    \[\delta:=\delta(X) = \sup_{x\neq y\in X}\frac{d_L(x,y)}{\norm{x-y}_2}.\]
    For simplicity of exposition, we refer to $\delta$ as \emph{the distortion of $X$}.
\end{definition}

Since $d_L$ is the induced length metric,
$\delta$ is bounded below by one and above by $+\infty$.
If~$X$ is a straight line segment, then~$\delta=1$. On the
other extreme, if $X$ is the subspace~$\{ (x,y)\in \R^2 ~|~ x^2=y^3 \}$, then~$\delta=+\infty$.
To see this, consider the limit as $\eps$ approaches zero of the two
points~$(-\eps^{3/2},\eps) \in X$ and $(\eps^{3/2},\eps) \in \X$, getting arbitrarily
close to the cusp point $(0,0)$.
Thus, both the lower and upper bounds on $\delta$ are tight.
For more on distortion, see~\cite{Sullivan:2008}.

\begin{remark}[Equivalence of Topologies]
	Given a metric space $(X,d)$, we can topologize $X$ with metric balls; that
	is, the topology is generated by sets of the form $B_d(x,r):= \{ y \in X~|~
	d(x,y) < r \}$, where $x\in X$ and $r \in \R$. If we assume that $d_L$ has
	finite distortion with respect to $d_2$, then $(X,d_L)$ and $(X,d_2)$ have
	equivalent topologies. The equivalence of the two topologies is a direct
	consequence of the following inequalities for $x,y \in X$:
	\begin{equation}\label{eqn:metric-equivalence}
		\norm{x-y}_2\leq d_L(x,y)\leq\delta\norm{x-y}_2.
	\end{equation}
\end{remark}
\begin{figure}[thb]
	\centering
	\includegraphics{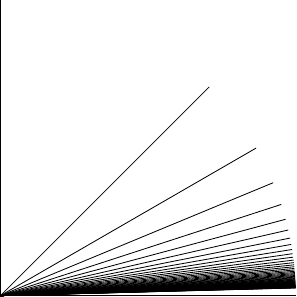}
	\caption{The set $X$, the closure of the union of the falling segments in
		the figure, is known as the \emph{infinite broom}. The topology of
		$(X,d_2)$ is strictly finer than the length metric topology
                of~$(X,d_L)$.  The latter topology is locally path-connected; whereas, the
		former topology is~not.}
	\label{fig:finer-topology}
\end{figure}

Equivalence of the topologies does not generally hold if the distortion of $X$
is not finite. For an example, let $X\subset\mathbb{R}^2$ be the closure of
the union of line segments~$\left\{\left[(0,0),\left(\cos{\frac{\pi}{2i}},
\sin{\frac{\pi}{2i}}\right)\right]\right\}_{i \in \nat}$, as shown in
\figref{finer-topology}. Such a space is also known as the \emph{infinite
broom}. We see that the distortion of the space is infinite by considering the
sequence $a_i=\left(\cos{\frac{\pi}{2i}},\sin{\frac{\pi}{2i}}\right)$ of points
on the right end of the spokes of the~broom:
\[\lim_{i \to \infty} \frac{d_L\big((0,1),a_i\big)}{\norm{(0,1)-a_i}_2} =
\infty.\]

The Euclidean metric topology, in this case, is strictly finer than the length
metric topology, as~$(X,d_L)$ is locally path-connected, but $(X,d_2)$
is not.

\paragraph{Convexity Radius}
Convexity radius of the underlying geodesic subspace is one of the parameters of
$X$ used in all our reconstruction results. We start with its formal definition
from \cite{hausmann_1995}. Although the concept is defined for general length
spaces, we restrict ourselves to only geodesic subspaces.

\begin{definition}[Convexity Radius]\label{def:conv-rad} We define the
	\emph{convexity radius}, denoted $\rho$, of a geodesic
	subspace~$X\subseteq\R^N$ to be the supremum of all $r>0$ such that:
	\begin{enumerate}[(i)]
		\item For all $x,y\in X$ with $d_L(x,y)<2r$, there exists a unique
		(length-minimizing) geodesic path joining $x$ and $y$.

		\item If $x,y,z,u\in X$ such that $d_L(x,y)<r$, $d_L(y,z)<r$,
		$d_L(z,x)<r$, and $u$ is a point on the (length-minimizing) geodesic path
		joining $x$ and $y$,
		then~$d_L(u,z)\leq~\max\left\{d_L(x,z),d_L(y,z)\right\}$.

		\item If $\gamma$ and $\gamma'$ are arc-length parametrized
		(length-minimizing) geodesics on $X$ such that $\gamma(0)=\gamma'(0)$,
		then $d_L\left(\gamma(ts),\gamma'(ts')\right)\leq
		d_L\left(\gamma(s),\gamma'(s')\right)$ for $0\leq s,s'<r$ and $0\leq
		t\leq 1$.
	\end{enumerate}
\end{definition}

Consider a circle in $\R^2$ with perimeter $R$;
its convexity radius is $\frac{R}{4}$.  Also, the convexity radius of
an embedded graph is $\frac{b}{4}$, where $b$ is the length of its smallest
simple cycle. It is well-known that the convexity radius of a compact Riemannian
manifold is positive. The convexity radius of a geodesic space is an intrinsic
property.

\subsection{Simplicial Complexes, Nerve Lemma}
We finally conclude this section by outlining a few important notions from
algebraic topology. Readers are referred to
\cite{Kozlov-book,MUNK,spanier1994algebraic} for more details.

\paragraph{Abstract Simplicial Complex}
The combinatorial analogue of a topological space, often used in algebraic and
combinatorial topology, is an abstract simplicial complex. An \emph{abstract
simplicial complex}~$\K$ is a collection of finite sets such that if  $\sigma
\in \K$, then so are all its non-empty subsets.

In general, elements of $\K$ are called \emph{simplices} of $\K$.
The singleton sets in $\K$ are often called the \emph{vertices} of $\K$.
If a simplex~$\sigma\in\K$ has cardinality $(q+1)$,
then it is called a \emph{$q$-simplex} (or the \emph{dimension} of $\sigma$ is $q$ or
$\dim(\sigma)=q$). If~$\sigma'\subseteq\sigma$, then $\sigma'$ is called a
\emph{face} of~$\sigma$.

\paragraph{Simplicial Maps and Contiguity}
Let $\K_1$ and $\K_2$ be abstract simplicial complexes with vertex sets~$\V_1$
and $\V_2$, respectively. A \emph{vertex map} is a map between the vertex
sets. Let~$\phi \colon \V_1 \to \V_2$ be a vertex map.
If, for all $\sigma \in \K_1$, we have~$\phi(\sigma):= \cup_{v\in \sigma} \{\phi(v)\}$ is, in fact, an element
of~$\K_2$, then we say that $\phi$ induces a \emph{simplicial map}
$\phi:\K_1\to\K_2$.
Two simplicial maps~$\phi_1,\phi_2:\K_1\to\K_2$ are called
\emph{contiguous} if for every simplex~$\sigma_1\in\K_1$, there
exists~$\sigma_2\in\K_2$ such that~$\phi_1(\sigma_1)\cup\phi_2(\sigma_1)\subseteq\sigma_2$.
A simplicial map between abstract simplicial complexes is the
combinatorial analogue of a continuous map between topological spaces; likewise,
contiguous simplicial maps play the role of homotopic maps in the combinatorial~world.

\paragraph{Geometric Complex}
Although, abstract simplicial complexes have enough combinatorial structure to
define simplicial homology and homotopy, they are not topological spaces. For an
abstract simplicial complex $\K$ with vertex set $\V$, its \emph{underlying
topological space} or \emph{geometric complex}, denoted as $\mod\K$, is defined
as the space of all functions~$\alpha:\V\to[0,1]$, also called \emph{barycentric
coordinates}, satisfying the following two properties:
\begin{enumerate}[(i)]
	\item $\supp(\alpha):=\{v\in\V\mid\alpha(v)\neq0\}\in\K$
	\item $\sum\limits_{v\in\V}\alpha(v)=1$.
\end{enumerate}
The details on the topologies on $\mod\K$ and their relations can be found
in~\cite{MUNK,spanier1994algebraic}. In this work, we use the standard
\emph{metric topology} on $|\K|$, as defined in \cite{spanier1994algebraic}.
Naturally, a simplicial map $\phi:\K_1\to\K_2$ induces a continuous
map~$\mod\phi:\mod{\K_1}\to\mod{\K_2}$ defined by
\[ \mod\phi(\alpha)(v')=\sum\limits_{\phi(v)=v'}\alpha(v),\text{ for }v'\in\K_2. \] As one
expects, the contiguous simplicial maps induce homotopic continuous maps between
their respective underlying topological spaces; see \cite{spanier1994algebraic}
for a proof.

\paragraph{Nerve Lemma}
A critical ingredient for our \v{C}ech reconstruction results is the Nerve Lemma
or a modification thereof; therefore, we discuss the concept here.  An open
cover $\U=\{U_i\}_{i\in\Lambda}$ of a topological space $X$ is called a
\emph{good cover} if all finite intersections of its elements are contractible.
The \emph{nerve} of~$\U$, denoted $\N(\U)$, is defined to be the simplicial
complex having $\Lambda$ as its vertex set, and for each non-empty $k$-way
intersection $U_{i_1}\cap U_{i_2}\cap\ldots\cap U_{i_k}$, the subset
$\{i_1,i_2,\ldots,i_k\}$ is a simplex of $\N(\U)$. Under the right assumptions,
the nerve preserves the homotopy~type of the union $X$, as stated by the
following fundamental result.
\begin{lemma}[Nerve Lemma~\cite{Alexandroff1928}]
	\label{lem:nerve}
	Let $\U=\{U_i\}_{i\in\Lambda}$ be a good open cover of a topological space
	$X$. Then, the underlying topological space $\mod{\N(\U)}$ is homotopy
	equivalent to $X$.
\end{lemma}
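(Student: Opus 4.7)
The plan is to establish the homotopy equivalence by the classical partition-of-unity argument, combined with an inductive extension over the skeleta of $\N(\U)$ that uses the good-cover hypothesis at every step. First I would invoke paracompactness of $X$ to obtain a partition of unity $\{\phi_i\}_{i\in\Lambda}$ subordinate to $\U$, meaning that $\supp(\phi_i)\subseteq U_i$ and $\sum_i\phi_i\equiv 1$. Using this, I would define a candidate map $f\colon X\to\mod{\N(\U)}$ by assigning barycentric coordinates $f(x)(i)=\phi_i(x)$. This is well-defined because, for each $x\in X$, the finite set $\{i:\phi_i(x)>0\}$ is contained in $\{i:x\in U_i\}$, which spans a simplex of $\N(\U)$ by the definition of the nerve; continuity of $f$ follows from the local finiteness of $\{\phi_i\}$.

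The heart of the argument is the construction of a homotopy inverse $g\colon\mod{\N(\U)}\to X$ by induction on the skeleta of $\N(\U)$. On the $0$-skeleton, choose $g(v_i)\in U_i$ arbitrarily. Suppose $g$ has been defined on the $(k-1)$-skeleton in such a way that the image of the boundary of each $k$-simplex $\sigma=\{i_0,\ldots,i_k\}$ lies in the intersection $U_\sigma:=U_{i_0}\cap\cdots\cap U_{i_k}$. Since $\sigma$ is a cone on its boundary and $U_\sigma$ is contractible by the good-cover hypothesis, the map $\partial\sigma\to U_\sigma$ is null-homotopic and extends continuously over all of $\sigma$, staying inside $U_\sigma$. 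This step is the only place where the contractibility of the finite intersections is used, and it is unavoidable.

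With $f$ and $g$ at hand, I would verify that $g\circ f\simeq\mathrm{id}_X$ and $f\circ g\simeq\mathrm{id}_{\mod{\N(\U)}}$. For any $x\in X$, the point $f(x)$ lies in the open simplex spanned by the (finite) set $\sigma(x):=\{i:\phi_i(x)>0\}$, so both $x$ and $g(f(x))$ lie inside the contractible set $U_{\sigma(x)}$; contractibility again supplies the required homotopy, pulled back along the simplex that contains $f(x)$. The reverse composite is handled analogously by a simplex-by-simplex null-homotopy inside each $U_\sigma$, patched together using the CW-structure of $\mod{\N(\U)}$ and the compatibility coming from the inductive construction of $g$.

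The main obstacle is ensuring that the piecewise inductive constructions---both of $g$ and of the two homotopies---glue into globally continuous maps. On the nerve side this is automatic from the weak topology on the geometric realization, since continuity can be checked one closed simplex at a time; on the space side it rests on local finiteness of the partition of unity, which guarantees that only finitely many of the $\phi_i$ are nonzero near any point of $X$, so the piecewise-in-$\sigma$ homotopies assemble continuously. A conceptually cleaner but heavier alternative would realize both $X$ and $\mod{\N(\U)}$ as geometric realizations of a common simplicial space built from the $U_\sigma$, and deduce the equivalence from the fact that each projection is a levelwise homotopy equivalence; but the elementary inductive proof above suffices for our setting.
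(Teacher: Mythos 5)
The paper does not prove this lemma at all---it cites \cite{Alexandroff1928} and \cite{Kozlov-book} and, in the remark that follows, merely records the partition-of-unity form of the equivalence; your map $f$ is exactly that map $h$ from \eqnref{nerve}, and that half of your argument is fine. The genuine problem is your skeletal induction for the homotopy inverse $g$: the inductive hypothesis you impose---that the image of the boundary of every $k$-simplex $\sigma$ lies in $U_\sigma$---cannot be arranged, even at the first step. For $k=1$ it forces $g(v_i)\in U_i\cap U_j$ for \emph{every} $j$ with $\{i,j\}$ an edge of the nerve, i.e.\ $g(v_i)\in\bigcap_j\left(U_i\cap U_j\right)$. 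Take the circle covered by three open arcs $U_1,U_2,U_3$ with nonempty pairwise intersections and empty triple intersection: this is a good cover whose nerve is the boundary of a triangle, and your hypothesis demands $g(v_1)\in U_1\cap U_2\cap U_3=\emptyset$. The root cause is that the assignment $\sigma\mapsto U_\sigma$ is \emph{anti}-monotone ($\tau\subseteq\sigma$ gives $U_\tau\supseteq U_\sigma$), so the image of $\partial\sigma$ is only controlled by the larger sets $U_\tau$, never by $U_\sigma$, and the extension problem is ill-posed. The classical repair is not cosmetic: pass to the barycentric subdivision, send the barycenter of $\sigma$ to a point of $U_\sigma$, and extend inductively using the carrier that assigns to a chain $\sigma_0\subsetneq\cdots\subsetneq\sigma_k$ the contractible set $U_{\sigma_0}$; this carrier \emph{is} monotone, which is precisely what makes each extension step well posed.

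The verification $g\circ f\simeq\mathrm{id}_X$ has the same defect in amplified form. Knowing that $x$ and $g(f(x))$ lie in a common contractible set $U_{\sigma(x)}$ gives a contraction pointwise, but not a continuous homotopy: the contractions of the various $U_\sigma$ have not been chosen coherently, and---crucially---$X$ is an arbitrary paracompact space with no CW structure, so the cell-by-cell patching you invoke is only available on the nerve side, not on the $X$ side. Local finiteness of the partition of unity gives continuity of $f$, but it does nothing to glue incompatible contractions. This is exactly why complete proofs either run the argument through the Mayer--Vietoris blowup complex (the ``heavier alternative'' you dismiss in your last paragraph), showing both projections to $X$ and to the nerve are homotopy equivalences via a partition of unity and a Dold-type criterion, or prove that $f$ is a weak equivalence and invoke Whitehead's theorem, which additionally requires $X$ to have CW homotopy type. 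As written, both the construction of $g$ and both homotopies fail; the carrier/subdivision machinery (or the blowup argument) is unavoidable, not an optional refinement.
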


\begin{remark}\label{rem:homotopy-equiv} If the open cover $\U$ is locally
	finite, then the homotopy equivalence in the Nerve
	Lemma is usually constructed with the help of a {\em partition of
        unity} for the cover~\cite{Kozlov-book}.
	Specifically, let $h:X\longrightarrow\mod{\N(\U)}$ be a homotopy
        equivalence. Then, a partition of unity is a collection of
        continuous functions~$\{\varphi_i\colon X\longrightarrow [0,1] \}_{i\in\Lambda}$
        such that for all $x \in X$,
	\begin{equation}\label{eqn:nerve}
		h(x)=\sum_{i\in\Lambda} \varphi_i(x) v_i,
	\end{equation}
	where $v_i$ denotes the vertex of $\N(\U)$ corresponding to the cover element
	$U_i$.  In addition, each~$\varphi_i$ must satisfy the following two requirements:
        (i) for all $i\in\Lambda$, the support of $\varphi_i$,
        denoted~$\supp(\varphi_i)$, is a compact proper subset of
        $U_i$, and
	(ii) for all $x\in X$, $\sum_{i\in\Lambda} \varphi_i(x)=1$.
\end{remark}

\paragraph{\v{C}ech and Vietoris-Rips Complexes}
Consider a subspace $A$ of a metric space $(M,d)$ and a positive scale $\alpha$.
The nerve of the collection of open metric balls of radius $\alpha$
centered at the points of $A$ is
known as the \emph{\v{C}ech complex} of~$A$ at scale (radius) $\alpha$.
We are interested in \v{C}ech complexes in two metric spaces: Euclidean and the
length metric space.  Let $X  \subseteq \R^N$.
Then, the \v{C}ech complex under the standard Euclidean metric is:
$\C_\alpha(X) := \N(\{ \B(x,r) \}_{x\in X})$, where $\B(x,r)$ is the Euclidean
ball of radius $r$ centered at $x$.
The \v{C}ech complex under the length
metric~$(X,d_L)$ is~$\C^L_\alpha(A):= \N(\{
\B^L(x,r) \}_{x\in X}),$ where $\B^L(x,r)$ denotes the metric ball of radius
$r$ centered at $X$ in $(X,d_L)$. Note that
these complexes may be infinite.

The \emph{Vietoris-Rips Complex} is an abstract simplicial complex having a
$k$-simplex for every set of $(k+1)$ points in $A$ of diameter at most $\alpha$.
Explicit knowledge about the entire metric space $(M,d)$ is not needed to
compute the complex. Unlike the \v{C}ech complex, the Vietoris-Rips complex is
completely determined by the restriction of the metric to the subset~$A$.
For~$X\subseteq\R^N$ under the standard Euclidean metric, we denote it simply by
$\Ri_\alpha(X)$. In the case when $A\subseteq X$ equipped with length metric
$(X,d_L)$, we denote the Vietoris-Rips complex by~$\Ri^L_\alpha(A)$.

Together, the definition of convexity radius and Nerve Lemma immediately imply the
following~fact:
\begin{lemma}[\v{C}ech Equivalence]\label{lem:Cech-equivalence}
	Let $X\subseteq\R^N$ be a geodesic subspace with a positive convexity radius
	$\rho$, and let~$0<\eps<\rho$. Let $A$ be an $\eps$-dense
	subset of $X$ with respect to the~$d_L$ metric.
        Then, the complex $\C^L_\eps(A)$ is homotopy equivalent to $X$.
\end{lemma}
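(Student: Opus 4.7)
The plan is to identify $\C^L_\eps(A)$ with the nerve of a natural open cover of $X$ and then apply the Nerve Lemma (\lemref{nerve}). Let $\U=\{\B_L(a,\eps)\}_{a\in A}$ be the collection of open length-metric balls at the sample points. Because $A$ is $\eps$-dense in $(X,d_L)$, every $x\in X$ lies in some $\B_L(a,\eps)$, so $\U$ is an open cover of $X$; directly from the definition of the length-metric \v{C}ech complex, $\N(\U)=\C^L_\eps(A)$. It then remains to verify that $\U$ is a \emph{good} cover and to invoke the Nerve Lemma.

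The heart of the argument is to check that every nonempty finite intersection $V=\bigcap_{i=1}^k \B_L(a_i,\eps)$ is contractible. First, I would establish geodesic convexity of $V$: for $y,y'\in V$ and each $i$, the hypothesis $\eps<\rho$ together with clause (i) of \defref{conv-rad} applied at $a_i$ provides a unique length-minimizing geodesic from $y$ to $y'$ lying entirely inside $\B_L(a_i,\eps)$. Since such a geodesic is unique in $X$, the same curve works simultaneously for every $i$, so it lies in $V$. Next, fixing a basepoint $p\in V$, I would contract $V$ to $p$ via the map $H:V\times[0,1]\to V$ defined by sending $(x,t)$ to the point at parameter $t$ along the unique geodesic from $x$ to $p$. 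Continuity of $H$ follows from clause (ii) of \defref{conv-rad}, which gives continuous dependence of the geodesic on its endpoints. Hence $V$ is contractible, $\U$ is a good open cover, and \lemref{nerve} yields $X\simeq|\N(\U)|=|\C^L_\eps(A)|$.

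The main obstacle I anticipate is this contractibility step. Geodesic convexity of the intersection is a routine consequence of uniqueness of length-minimizing geodesics inside each ball, but producing an honest continuous contraction map requires the (often overlooked) continuity clause in the convexity-radius definition; without it, one would only obtain a set-theoretic straight-line retraction and not a homotopy. This is precisely where the full strength of \defref{conv-rad} enters the argument. Paracompactness of $X$, required by \lemref{nerve}, is granted by the blanket assumption in the preliminaries, so no additional work is needed there.
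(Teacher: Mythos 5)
Your proof is correct and takes essentially the same route as the paper: both identify $\C^L_\eps(A)$ with the nerve of the cover of $(X,d_L)$ by intrinsic $\eps$-balls and invoke the Nerve Lemma, with contractibility of finite intersections coming from the two clauses of \defref{conv-rad}. If anything, your version is slightly more careful at the contractibility step, since you first establish geodesic convexity of the intersection and then contract to a basepoint chosen \emph{inside} it, whereas the paper's phrasing contracts ``to its center,'' which is only literally available for a single ball.
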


\begin{proof}
    Since $A$ is an $\eps$-dense subset of $X$, we know that $\mathcal{U} :=
    \cup_{a \in A}\B^L(a,\eps)$ is an open cover of~$(X,d_L)$. Since $\eps<\rho$
    and by the definition of convexity radius (\defref{conv-rad}), we know that
    for each $x \in X$ and $y \in \B^L(x,\eps)$,  there exists a unique
    length-minimizing geodesic path between~$x$ and $y$.  Using these paths to
    define a deformation retract from~$\B^{L}(x,\eps)$ to~$x$, we conclude that
    the metric balls in $\mathcal{U}$ are contractible. Since any finite
    intersection of metric balls in $\mathcal{U}$ has dimeter less than $2\eps$,
    by the similar argument it is also contractible. Hence, $\mathcal{U}$ is a
    good cover of $X$. By the Nerve Lemma~(\lemref{nerve}), we conclude that the
    complex~$\C^L_\eps(A)$ is homotopy equivalent to $X$.
\end{proof}

\section{Topological Reconstruction}\label{sec:topo-recon}
In this section, we consider the problem of topological reconstruction of a
geodesic subspace~$X$ of $\R^N$ from a noisy sample $S$. From now on, unless
otherwise stated, we assume that the underlying shape $X$ has a positive
convexity radius and a finite distortion, also that the sample~$S$ is a finite
subset of $\R^N$. We show that both \v{C}ech and Vietoris-Rips filtrations of
$S$ can be used to compute the homology and homotopy groups of $X$. Before we
treat each type of complex separately, we show how the \v{C}ech and
Vietoris-Rips complexes behave under Hausdorff perturbation.
\begin{lemma}[Hausdorff Distance and Complexes]
	\label{lem:hausdorff}
	Let $A, B\subseteq\R^N$ be finite, and $\eps$ be a positive number such
	that~$d_H(A,B)<\eps$. Then for any $\alpha>0$, there exist simplicial maps
	\[\C_\alpha(A)\longrightarrow\C_{\alpha+\eps}(B)\]
	and
	\[\Ri_\alpha(A)\longrightarrow\Ri_{\alpha+2\eps}(B)\] induced by a vertex
    map $\xi:A\to B$ such that for every vertex $a\in A$, we have
    $\norm{a-\xi(a)}_2<\eps$.  Moreover, such simplicial maps are unique, up
    to~contiguity.
\end{lemma}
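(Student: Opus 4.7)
The plan is to construct $\xi$ first as a vertex map, then verify it extends to a simplicial map on each of the two complexes, and finally use a triangle-inequality argument to prove uniqueness up to contiguity.

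For the vertex map, since $d_H(A,B) < \eps$, for every $a \in A$ there is at least one $b \in B$ with $\|a - b\| < \eps$; I would invoke the axiom of choice to pick one such $b$ and set $\xi(a) := b$. This already gives the vertex-displacement bound $\|a - \xi(a)\| < \eps$ claimed in the statement. The only real content left is then to check the simplex condition for the two families of complexes.

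For the \v{C}ech map, suppose $\sigma = \{a_0, \ldots, a_k\} \in \C_\alpha(A)$, so there is a witness $p \in \R^N$ with $\|a_i - p\| < \alpha$ for every $i$. The triangle inequality gives $\|\xi(a_i) - p\| \le \|\xi(a_i) - a_i\| + \|a_i - p\| < \eps + \alpha$, so the same $p$ witnesses $\{\xi(a_0), \ldots, \xi(a_k)\} \in \C_{\alpha+\eps}(B)$. For the Rips map, if $\sigma = \{a_0,\ldots,a_k\} \in \Ri_\alpha(A)$, then for any two vertices $a_i, a_j$ we have
\[
\|\xi(a_i) - \xi(a_j)\| \le \|\xi(a_i) - a_i\| + \|a_i - a_j\| + \|a_j - \xi(a_j)\| < \eps + \alpha + \eps,
\]
so $\{\xi(a_0),\ldots,\xi(a_k)\} \in \Ri_{\alpha+2\eps}(B)$. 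This is where the factor of $2$ in the Rips case comes in, since both endpoints of every edge get perturbed independently.

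For uniqueness up to contiguity, let $\xi_1, \xi_2$ be two simplicial maps produced as above. Given $\sigma = \{a_0,\ldots,a_k\}$ a simplex of either source complex, I need to exhibit a simplex of the target complex containing $\xi_1(\sigma) \cup \xi_2(\sigma)$. In the \v{C}ech case, reuse the common witness $p$ with $\|a_i - p\| < \alpha$: the bound $\|\xi_\ell(a_i) - p\| < \alpha + \eps$ holds for both $\ell = 1, 2$ and all $i$, so $\xi_1(\sigma) \cup \xi_2(\sigma)$ lies in $\C_{\alpha+\eps}(B)$. In the Rips case, the triangle-inequality estimate
\[
\|\xi_1(a_i) - \xi_2(a_j)\| \le \|\xi_1(a_i) - a_i\| + \|a_i - a_j\| + \|a_j - \xi_2(a_j)\| < 2\eps + \alpha
\]
handles every pair, so the union is a simplex of $\Ri_{\alpha+2\eps}(B)$.

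The proof is essentially just a careful unpacking of the definitions, so I do not expect any genuine obstacle; the only place to be pedantic is matching the strict-vs-nonstrict inequality conventions used for the Rips and \v{C}ech complexes in the paper's definition, to make sure the produced parameters $\alpha+\eps$ and $\alpha+2\eps$ are admissible. If the paper uses closed balls or diameter $\le \alpha$, the same computation still goes through with the same bounds because the added slack $\eps$ (resp.\ $2\eps$) is strict.
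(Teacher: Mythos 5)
Your proof is correct and follows essentially the same route as the paper's: a vertex map obtained from the Hausdorff bound, triangle-inequality checks of the simplex condition for each complex, and a shared-witness (resp.\ pairwise) estimate for contiguity. You even spell out the Vietoris--Rips contiguity case explicitly, which the paper leaves implicit.
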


\begin{proof}
	We first note the definition
	\[d_H(A,B)=\inf{\{\eps>0\mid A\subseteq B^\eps, B\subseteq A^\eps\}},\]
	where $A^\eps$ denotes the Euclidean thickening of $A$.

	The definition of Hausdorff distance implies that if $d_H(A,B)<\eps$, there
	exists a (possibly non-unique, non-continuous) map $\xi:A\to B$ such that
	$\norm{a-\xi(a)}_2<\eps$. We show that this vertex map extends to a
	simplicial map between both \v{C}ech and Vietoris-Rips complexes.

	Let $\sigma=\{a_0,a_1,\ldots,a_k\}$ be a $k$-simplex of $\C_\alpha(A)$. By
        definition, there exists a point~$z$ in $\R^N$ such that~$\norm{a_i-z}_2<\alpha$ for
	all $i \in \{0, 1, \ldots, k\}$. By the triangle inequality, we then have
	$$\norm{\xi(a_i)-z}_2\leq\norm{\xi(a_i)-a_i}_2 +
	\norm{a_i-z}_2<\eps+\alpha.$$ So, $\{\xi(a_0),\cdots,\xi(a_k)\}$ is a
	simplex of~$\C_{\alpha+\eps}(B)$. Hence, $\xi$ extends to a simplicial map
	between the \v{C}ech complexes. To argue for the uniqueness of the
	simplicial map, let us assume that~$\eta$ is another simplicial map with the
	property that for every vertex $a\in A$, we have $\norm{a-\eta(a)}_2<\eps$.
	Again from the triangle inequality, we have
	$\norm{\eta(a_i)-z}_2<\eps+\alpha$. So, $\xi(\sigma)\cup\eta(\sigma)$ is a
	simplex of $\C_{\alpha+\eps}(B)$. Hence,~$\xi$ and $\eta$ are contiguous.

	For the Vietoris-Rips complex part, we follow a similar argument. Let
	$\sigma=\{a_0,a_1,\ldots,a_k\}$ be a~$k$-simplex of $\Ri_{\alpha}(A)$. By
	definition, the diameter of $\sigma$ is not greater than $\alpha$. From the
	triangle inequality, we have
	$$\norm{\xi(a_i)-\xi(a_j)}_2\leq\norm{\xi(a_i)-a_i}_2 + \norm{a_i-a_j}_2 +
	\norm{\xi(a_j)-a_j}_2<2\eps+\alpha.$$ So, $\{\xi(a_0),\cdots,\xi(a_k)\}$ is
	a simplex of $\Ri_{\alpha+2\eps}(A)$. Hence, $\xi$ extends to a simplicial
	map also between Vietoris-Rips complexes.
\end{proof}
%

\subsection{Homology Groups via Vietoris-Rips Complex}\label{subsec:rips-recon}
We use the following fundamental result from \cite{hausmann_1995} to compute the
homology groups of $X$ from a filtration of Vietoris-Rips complexes on a finite
sample.
\begin{theorem}[Hausmann's Theorem \cite{hausmann_1995}]\label{thm:hausmann}
	Let $X$ be a geodesic subspace with a positive convexity radius~$\rho$. For
	$0<\eps<\rho$, there exists a homotopy equivalence
	$T:\mod{\Ri^L_\eps(X)}\longrightarrow X$.
\end{theorem}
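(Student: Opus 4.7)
The plan is to construct the homotopy equivalence $T$ explicitly, following the strategy of Hausmann's original argument. The key geometric observation is that the hypothesis $0<\eps<\rho$ forces every simplex $\sigma=\{x_0,\ldots,x_k\}$ of $\Ri^L_\eps(X)$ to lie inside the length-metric ball $\B_L(x_0,\rho)$, since $d_L(x_0,x_i)<\eps<\rho$ for all $i$. Inside such a ball, \defref{conv-rad} guarantees that length-minimizing geodesics between any two points exist, are unique, and depend continuously on their endpoints, so ``geodesic convex combinations'' are unambiguously defined.

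First, I would fix a well-ordering $\prec$ on $X$ (invoking the axiom of choice, as in the original proof) and define $T\colon\mod{\Ri^L_\eps(X)}\to X$ by induction on the dimension of simplices. On a vertex, set $T(x)=x$. On a simplex with vertices $x_0\prec\cdots\prec x_k$, send a point with barycentric coordinates $(t_0,\ldots,t_k)$ to the image obtained by starting at the (inductively defined) image of the opposite face and sliding along the unique length-minimizing geodesic toward $x_k$ at parameter $t_k$. Consistency across faces follows from the inductive definition together with the well-ordering, while continuity and well-definedness follow from the continuous dependence of geodesics inside $\B_L(x_0,\rho)$.

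Next, I would build a candidate homotopy inverse $H\colon X\to\mod{\Ri^L_\eps(X)}$ via the Nerve Lemma. The collection $\mathcal{U}=\{\B_L(x,\eps/2)\}_{x\in X}$ is a good cover of $X$, since each ball and every nonempty finite intersection is geodesically convex inside $\rho$ and hence contractible. A partition of unity $\{\varphi_x\}$ subordinate to $\mathcal{U}$ yields a map $h\colon X\to\mod{\C^L_{\eps/2}(X)}$ via the formula in \lemref{nerve}; composing with the natural simplicial inclusion $\C^L_{\eps/2}(X)\inclusion\Ri^L_\eps(X)$ (justified by the triangle inequality applied to any common center) then gives the desired map $H$.

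The main obstacle is verifying that $T\circ H$ and $H\circ T$ are each homotopic to the respective identities. For $T\circ H$, one shows that $T(H(p))$ and $p$ both lie in a common convex ball and constructs a straight-line geodesic homotopy. For $H\circ T$, a simplex-by-simplex argument uses the contractibility of stars of simplices in $\Ri^L_\eps(X)$, which itself reduces to geodesic convexity within radius $\rho$. The delicate point is ensuring the inductive definition of $T$ is consistent across faces of arbitrarily high dimension when the well-ordering is non-canonical; Hausmann's original argument manages this through a careful case analysis that I would follow.
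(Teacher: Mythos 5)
The paper does not prove this statement at all: it is imported verbatim from Hausmann \cite{hausmann_1995}, so there is no in-paper argument to measure your attempt against. Judged on its own, your first half is sound in outline: the construction of $T$ by iterated geodesic combinations over a well-ordering is indeed Hausmann's, and your observation that $\eps<\rho$ places every simplex of $\Ri^L_\eps(X)$ inside a ball $\B_L(x_0,\rho)$ in which minimizing geodesics exist, are unique, remain in the ball, and vary continuously with endpoints is exactly what is needed to transplant that construction from Riemannian manifolds to geodesic subspaces. Up to the face-consistency check you defer to Hausmann, that part is acceptable.

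The second half has a genuine gap. Your homotopy inverse $H$ is obtained by applying the Nerve Lemma to the cover $\{\B_L(x,\eps/2)\}_{x\in X}$, but this cover is indexed by \emph{every} point of $X$ and is not locally finite (each point of $X$ lies in uncountably many of the balls whenever $X$ is not discrete), so the partition of unity underlying the map $h$ of \eqnref{nerve} does not exist and $H\colon X\to\mod{\C^L_{\eps/2}(X)}$ is not defined as described. This is not a pedantic objection: the paper itself flags precisely this failure, noting that ``due to the technical assumption on the locally-finiteness of the open cover in the Nerve Lemma, the (infinite) \v{C}ech complex $\C^L_*(X)$ is no longer guaranteed to be homotopy equivalent to $X$,'' which is why its own \v{C}ech argument is routed through finite subsamples and the controlled partition of unity of \lemref{pou}. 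Moreover, even granting some $H$, the claims $T\circ H\simeq\mathrm{id}$ and especially $H\circ T\simeq\mathrm{id}$ are asserted rather than proved; contractibility of stars does not by itself yield a homotopy to the identity on the typically infinite-dimensional complex $\mod{\Ri^L_\eps(X)}$, and this is where the real work lies. Hausmann's actual argument avoids constructing an inverse altogether: he shows that $T$ induces isomorphisms on all homotopy groups by lifting maps of spheres and of homotopies through sufficiently fine triangulations, and then invokes Whitehead's theorem. To complete your proof you should either follow that route (checking that both spaces have CW homotopy type, which again uses the positive convexity radius), or replace your cover by a locally finite good refinement and then control the comparison between its nerve and $\Ri^L_\eps(X)$.
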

Note that $\Ri^L_\eps(X)$ is usually an infinite Vietoris-Rips complex on the
entire space $X$. A quick corollary of this result is:
\begin{corollary}\label{cor:hausmann}
	Let $X$ be a geodesic subspace with a positive convexity radius $\rho$. For
	$0<\eps'\leq\eps<\rho$, the inclusion
	$i:\Ri^L_{\eps'}(X)\inclusion\Ri^L_\eps(X)$ induces isomorphisms on homology
	and homotopy groups.
\end{corollary}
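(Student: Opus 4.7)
The plan is to bootstrap from \thmref{hausmann} applied at both scales. Since $\eps' \le \eps < \rho$, the theorem produces homotopy equivalences $T_\eps : |\Ri^L_\eps(X)| \to X$ and $T_{\eps'} : |\Ri^L_{\eps'}(X)| \to X$. If one can show that the triangle
\[
  T_\eps \circ |i| \simeq T_{\eps'}
\]
commutes up to homotopy, then the two-out-of-three property for homotopy equivalences forces $|i|$ to be a homotopy equivalence, which in turn yields isomorphisms on all homology and homotopy groups, as required.

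The main task therefore reduces to producing the homotopy $T_\eps \circ |i| \simeq T_{\eps'}$. My preferred route is to invoke the explicit description of Hausmann's map: after fixing a total order (or more generally a choice function) on the vertex set $X$, both $T_\eps$ and $T_{\eps'}$ are constructed from the \emph{same} combinatorial data, so with a uniform choice the composition $T_\eps \circ |i|$ literally equals $T_{\eps'}$ on the nose. If one prefers not to appeal to the internals of the construction, one can argue more abstractly: for each simplex $\sigma \in \Ri^L_{\eps'}(X)$, the vertices of $\sigma$ lie in a common $d_L$-ball of radius $\eps' \le \eps < \rho$, and the convexity condition on $\rho$ guarantees that the images of $\sigma$ under both $T_\eps \circ |i|$ and $T_{\eps'}$ land inside a common strongly convex ball. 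Inside such a ball, the unique length-minimizing geodesics (continuous in endpoints by \defref{conv-rad}(ii)) furnish a canonical straight-line homotopy, which can be assembled simplex-wise into a global homotopy by standard partition-of-unity or induction-on-skeleta arguments.

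The hard part will be verifying that this simplex-wise construction glues into a \emph{continuous} global homotopy, i.e.\ checking compatibility on faces. This is routine if one uses Hausmann's original choice-function construction, where continuity on overlaps is automatic; otherwise one must carefully keep track of which convex ball contains the image of each face, using the fact that faces of $\Ri^L_{\eps'}(X)$-simplices are themselves contained in the enveloping $\eps'$-ball. Once homotopy equivalence of $|i|$ is established, the induced maps $i_* : \pi_k(\Ri^L_{\eps'}(X)) \to \pi_k(\Ri^L_\eps(X))$ and $i_* : H_k(\Ri^L_{\eps'}(X)) \to H_k(\Ri^L_\eps(X))$ are isomorphisms by the homotopy invariance of these functors, completing the corollary.
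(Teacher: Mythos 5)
Your primary route is exactly the paper's proof: both invoke the explicit construction of Hausmann's map so that $T_{\eps}\circ|i|$ agrees with $T_{\eps'}$ (the paper phrases this as $T'$ being the restriction of $T$), and then conclude by the two-out-of-three property that $|i|$ is a homotopy equivalence. The proposal is correct; the alternative convexity-based homotopy you sketch is unnecessary once the restriction property of Hausmann's construction is used.
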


In order to achieve our result, we use certain simplicial maps to compare
$\Ri^L_*(X)$, $\Ri_*(X)$, and $\Ri_*(S)$.
\begin{lemma}[Euclidean and Intrinsic Rips Complexes]
	\label{lem:rips-intrinsic}
	Let $X$ a geodesic subspace of~ $\R^N$ with a finite distortion $\delta$.
	Then for $A\subseteq X$ and any positive number $\alpha$, we have the
	following simplicial inclusions
	\[\Ri^L_{\alpha}(A)\inclusion\Ri_\alpha(A)
	\inclusion\Ri^L_{\delta\alpha}(A).\]
\end{lemma}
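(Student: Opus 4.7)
The plan is to derive both inclusions directly from the distortion inequality
\eqref{eqn:metric-equivalence}, namely
\[
\|x-y\| \;\le\; d_L(x,y) \;\le\; \delta\,\|x-y\|
\qquad\text{for all } x,y\in X,
\]
which was recorded in the preliminaries as an immediate consequence of the
definition of distortion. Since the vertex sets of all three complexes are the
same (namely $A$), it suffices in each case to verify the simplex inclusion on
faces; the induced simplicial map is then the identity on vertices.

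For the first inclusion $\Ri^L_\alpha(A)\hookrightarrow\Ri_\alpha(A)$, I would
take a simplex $\sigma=\{a_0,\dots,a_k\}\in\Ri^L_\alpha(A)$, so by definition
$d_L(a_i,a_j)<\alpha$ for all $i,j$. Applying the left-hand side of the
distortion inequality yields $\|a_i-a_j\|\le d_L(a_i,a_j)<\alpha$, hence
$\sigma$ has Euclidean diameter less than $\alpha$ and therefore belongs to
$\Ri_\alpha(A)$.

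For the second inclusion $\Ri_\alpha(A)\hookrightarrow\Ri^L_{\delta\alpha}(A)$,
I would take $\sigma=\{a_0,\dots,a_k\}\in\Ri_\alpha(A)$, so $\|a_i-a_j\|<\alpha$
for all $i,j$. The right-hand side of the distortion inequality then gives
$d_L(a_i,a_j)\le\delta\|a_i-a_j\|<\delta\alpha$, so the $d_L$-diameter of
$\sigma$ is less than $\delta\alpha$, placing $\sigma$ in
$\Ri^L_{\delta\alpha}(A)$.

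There is essentially no obstacle here: the statement is a direct translation of
the two-sided metric comparison into the combinatorial diameter condition that
defines the Vietoris--Rips complex, and the identity vertex map carries
simplices to simplices in both directions at the stated scales. The only mild
subtlety worth flagging is that the inclusions rely on $A$ lying entirely
inside $X$, so that both the intrinsic distance $d_L$ and the distortion
bound apply to every pair of sample points; this is part of the hypothesis.
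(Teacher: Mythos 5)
Your proof is correct and follows essentially the same route as the paper: both inclusions are read off directly from the two-sided distortion inequality $\norm{x-y}\leq d_L(x,y)\leq\delta\norm{x-y}$ applied to the diameter condition defining the Vietoris--Rips complex. The paper's own proof is just a two-line version of your argument, so nothing further is needed.
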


\begin{proof}
	The fact that $\norm{x-y}_2\leq d_L(x,y)$ implies the first inclusion
	$\Ri^L_{\alpha}(A)\inclusion\Ri_{\alpha}(A)$.  Similarly,
	$d_L(x,y)\leq\delta\norm{x-y}_2$ implies the second inclusion.
\end{proof}

\begin{theorem}[Reconstruction via Rips Complex]\label{thm:Rips-persistence} Let
	$X$ be a geodesic subspace of~$\R^N$ with a positive convexity radius $\rho$
	and finite distortion $\delta$. Let $S$ be a finite subset of~$\R^N$, and
	let $\eps$ be a positive number such that
	\[
		d_H(X,S)<\frac{\eps}{4}<\frac{\rho}{2\delta(3\delta+2)}.
	\]
	Then, for any non-negative integer $k$ we have the following isomorphism
	\begin{equation*}\label{eq:Rips-persistence}
		H_k(X)\cong \operatorname{im}\bigl(j_\ast:H_k(\Ri_\eps(S))\inclusion H_k(\Ri_{\frac{1}{2}(3\delta+1)\eps}(S))\bigr)
	\end{equation*}
	where $j_\ast$ is induced by the simplicial inclusion
	$j:\Ri_\eps(S)\inclusion\Ri_{\frac{1}{2}(3\delta+1)\eps}(S)$.
\end{theorem}
\begin{proof}
	We derive the following chain of simplicial maps:
	\begin{equation}\label{eq:maps}
		\Ri^L_{\frac{\eps}{2}}(X)\map[\phi_1] \Ri_{\eps}(S)\map[\phi_2]
		\Ri^L_{\frac{3\eps}{2}\delta}(X)\map[\phi_3]
		\Ri_{(3\delta+1)\frac{\eps}{2}}(S)\map[\phi_4]
		\Ri^L_{\frac{1}{2}(3\delta+2)\delta\eps}(X).
	\end{equation}
	The first map $\phi_1$ is the composition of the simplicial inclusion
	$\Ri^L_{\frac{\eps}{2}}(X)\inclusion\Ri_{\frac{\eps}{2}}(X)$ from
	\lemref{rips-intrinsic} and the simplicial map
	$\rips{X}{\frac{\eps}{2}}\map\Ri_{\eps}(S)$ from \lemref{hausdorff}, thanks
	to the assumption $d_H(S,X)<\frac{\eps}{4}$.

	Now, starting with $\rips{S}{\eps}$ and composing maps from
	\lemref{hausdorff} and \lemref{rips-intrinsic}, respectively, we get the
	second simplicial map $\phi_2$. Similarly, we get the maps $\phi_3$ and
	$\phi_4$.

	From \lemref{hausdorff}, we first note that the composition
	$\phi_3\circ\phi_2$ is contiguous to the inclusion:
	\[j:\Ri_\eps(S)\inclusion\Ri_{(3\delta+1)\frac{\eps}{2}}(S).\]
	Therefore,
	they induce homotopic maps on the respective underlying topological
	spaces. Consequently, we have $(\phi_3\circ\phi_2)_*=j_*$. We first argue
	that~${\phi_2}_*$ is surjective and ${\phi_3}_*$ is injective.

	By the choice of the simplicial maps in \lemref{rips-intrinsic} and
	\lemref{hausdorff}, we observe that~$\phi_2\circ\phi_1$ is contiguous to the
	inclusion
	\[\Ri^L_{\frac{\eps}{2}}(X)\inclusion \Ri^L_{\frac{3\eps}{2}\delta}(X).\]
	By
	\corref{hausmann}, the inclusion induces isomorphism on homology, hence so
	does $\phi_2\circ\phi_1$. In particular,~$(\phi_2\circ\phi_1)_*$ is
	surjective. Hence, we have ${\phi_2}_*$ is surjective, and ${\phi_1}_*$
	is injective.

	Also, $\phi_4\circ\phi_3$ is contiguous to the inclusion
	\[\Ri^L_{\frac{3\eps}{2}\delta}(X)\inclusion
	\Ri^L_{\frac{1}{2}(3\delta+2)\delta\eps}(X),\]
	which induces an isomorphism on
	homologies. Therefore, ${\phi_3}_*$ induces an injective homomorphism.

	Since we have $j_*={\phi_3}_*\circ{\phi_2}_*$ and ${\phi_2}_*$ is
	surjective, the image of $j_*$ is the image of ${\phi_3}_*$. On the other
	hand, we know that $Im({\phi_3}_*)$ is isomorphic to
	$H_*\left(\Ri^L_{\frac{3\eps}{2}\delta}(X)\right)/Ker({\phi_3}_*)$. As we
	have already shown that~${\phi_3}_*$ is injective, its kernel is trivial.
	Therefore, the image of $j_*$ is isomorphic to
	$\Ri^L_{\frac{3\eps}{2}\delta}(X)$. Since $\frac{3\eps}{2}\delta<\rho$,
	\thmref{hausmann} implies that~$\Ri^L_{\frac{3\eps}{2}\delta}(X)$ is, in
	fact, homotopy equivalent to $X$. This completes the proof.
\end{proof}

The Vietoris-Rips reconstruction result works also for an infinite sample $S$. In
applications, however, we are computationally constrained to use only finite
samples.

\subsection{Homology Groups via \v{C}ech Complex}
The reconstruction of homology groups via the Vietoris-Rips filtration (see
\thmref{Rips-persistence} in \subsecref{rips-recon}) was due to the homotopy
equivalence theorem (\thmref{hausmann}).  In this subsection, we use
\v{C}ech filtration to obtain similar reconstruction results. The Nerve Lemma
(\lemref{nerve}) is resorted to as the \v{C}ech alternative to \thmref{hausmann}. Like the Vietoris-Rips case, we still use different simplicial maps to
compare~$\C^L_*(X)$,~$\C_*(X)$, and~$\C_*(S)$. The approach involves a
(controlled) variant of the partition of unity; see \lemref{pou}.

\begin{lemma}[Euclidean and Intrinsic \v{C}ech Complexes]
	\label{lem:cech-intrinsic}
	Let $X$ a geodesic subspace of~ $\R^N$ with a finite distortion $\delta$.
	Then for $A\subseteq X$ and any positive number $\alpha$, we have the
	following simplicial inclusions
	\begin{equation*}
		\C^L_\eps(A)\inclusion\C_\alpha(A)
		\inclusion\C^L_{2\delta\alpha}(A).
	\end{equation*}
\end{lemma}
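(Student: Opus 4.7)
\emph{Proof plan.} My plan is to verify both simplicial inclusions directly from the definitions, using only the two-sided comparison $\norm{x-y}\leq d_L(x,y)\leq\delta\norm{x-y}$ guaranteed by the finite-distortion hypothesis (inequality~\eqref{eqn:metric-equivalence}). Since $\C^L_\alpha(A)$ and $\C_\alpha(A)$ share the same vertex set $A$, it suffices in each case to argue on simplices.

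For the first inclusion $\C^L_\alpha(A)\inclusion\C_\alpha(A)$, I would take an arbitrary simplex $\sigma=\{a_0,\ldots,a_k\}\in\C^L_\alpha(A)$. By the nerve definition of the intrinsic \v{C}ech complex, there is a point $z\in X$ with $d_L(a_i,z)<\alpha$ for every $i$. The inequality $\norm{a_i-z}\leq d_L(a_i,z)<\alpha$, together with $z\in X\subseteq\R^N$, shows that this very $z$ also witnesses $\sigma\in\C_\alpha(A)$.

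For the second inclusion $\C_\alpha(A)\inclusion\C^L_{2\delta\alpha}(A)$, let $\sigma=\{a_0,\ldots,a_k\}\in\C_\alpha(A)$ and fix a common Euclidean center $z\in\R^N$ with $\norm{a_i-z}<\alpha$. The triangle inequality yields the Euclidean diameter bound $\norm{a_i-a_j}<2\alpha$, and the distortion bound then upgrades this to $d_L(a_i,a_j)\leq\delta\norm{a_i-a_j}<2\delta\alpha$. Choosing the vertex $a_0\in X$ as an intrinsic center gives $d_L(a_i,a_0)<2\delta\alpha$ for every $i$, so $a_0\in\bigcap_i\B_L(a_i,2\delta\alpha)$ and $\sigma\in\C^L_{2\delta\alpha}(A)$.

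The only subtle point—really the main (and mild) obstacle—is that the Euclidean center $z$ witnessing a simplex of $\C_\alpha(A)$ need not lie in $X$, so it cannot be reused as an intrinsic center. The fix is to observe that any vertex of the simplex is itself a perfectly good candidate for an intrinsic center; the distortion inequality is then what forces the factor $\delta$ in the enlarged radius, and the factor $2$ arises from the usual inflation incurred when replacing a radius bound at $z$ with a radius bound at a vertex.
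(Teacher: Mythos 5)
Your proof is correct and follows essentially the same route as the paper's: the first inclusion from $\norm{x-y}\leq d_L(x,y)$, and the second by converting the Euclidean diameter bound $2\alpha$ into an intrinsic diameter bound $2\delta\alpha$ and exhibiting a vertex (the paper notes all vertices, you pick $a_0$) as a common intrinsic center. Your explicit remark that the Euclidean witness $z$ need not lie in $X$ is a nice clarification of why the factor $2$ appears, but the underlying argument is identical.
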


\begin{proof}
	From $\norm{x-y}_2\leq d_L(x,y)$, we have the first inclusion.

	On the other hand, for any $x,y\in X$ we have
	$d_L(x,y)\leq\delta\norm{x-y}_2$. Let $\sigma=\{x_0,...,x_k\}$ be a simplex
	of $\cech{A}{\alpha}$.  Then $\norm{x_i-x_j}_2< 2\alpha$, consequently
	$d_L(x_i,x_j)< 2\delta\alpha$ for all $1\leq i, j\leq k$. This implies
	\[
	\{x_0,x_1,\ldots,x_k\}\subset \bigcap^k_{i=0} \B^L(x_i,2\delta\alpha),
	\]
	where $\B^L(x_i,r)$ denotes the ball of radius $r$ centered at $x_i$ in the
    metric space~$(X,d_L)$. Therefore $\sigma\in\C^L_{2\delta\alpha}(A)$, and
    this verifies the second inclusion.
\end{proof}

We begin with a lemma that is analogous to \corref{hausmann} in the \v{C}ech
regime:
\begin{lemma}[Inclusion of Covers]\label{lem:nerve-inclusion} Let
	$\U=\{U_i\}_{i\in\Lambda}$ and $\U'=\{U'_i\}_{i\in\Lambda}$ be locally-finite,
	good open covers of a para-compact topological space~$X$ such that
	$U_i\subseteq U'_i$ for each $i$. Then, the inclusion
	\[i:\N(\U)\inclusion\N(\U')\]
	induces isomorphisms on the homology and homotopy groups of the respective
	geometric complexes.
\end{lemma}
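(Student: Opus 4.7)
The plan is to reduce to the Nerve Lemma on both sides and use a single partition of unity to realize the inclusion $|i|:|\N(\U)|\to|\N(\U')|$ (up to homotopy) as a composite of homotopy equivalences. The key observation is that if $\{\varphi_i\}_{i\in\Lambda}$ is a locally finite partition of unity subordinate to $\U$ (available by paracompactness), then because $\supp\varphi_i\subset U_i\subseteq U_i'$ the same family is simultaneously subordinate to $\U'$. So the formula in Remark \ref{rem:homotopy-equiv} yields two maps built from the same weights,
\[
h(x)=\sum_{i\in\Lambda}\varphi_i(x)\,v_i \in |\N(\U)|,\qquad h'(x)=\sum_{i\in\Lambda}\varphi_i(x)\,v_i'\in|\N(\U')|,
\]
where $v_i,v_i'$ are the vertices of $\N(\U),\N(\U')$ indexed by $i$.

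Next, I would check that $|i|\circ h=h'$ on the nose: since the simplicial map $i$ sends $v_i$ to $v_i'$ and $|i|$ extends barycentrically, it carries $\sum\varphi_i(x)v_i$ to $\sum\varphi_i(x)v_i'$. It remains to verify that both $h$ and $h'$ are homotopy equivalences. For $h$ this is exactly the Nerve Lemma as stated in Remark \ref{rem:homotopy-equiv}. For $h'$, pick any partition of unity $\{\psi_j\}$ subordinate to $\U'$ and let $k'(x)=\sum\psi_j(x)v_j'$; by the Nerve Lemma $k'$ is a homotopy equivalence. The straight-line homotopy $H_t(x)=(1-t)h'(x)+t\,k'(x)$ lies in $|\N(\U')|$ because at each $x$ every index $i$ (resp.\ $j$) with positive weight satisfies $x\in U_i'$ (resp.\ $x\in U_j'$), so their union indexes a simplex of $\N(\U')$. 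Hence $h'\simeq k'$, and $h'$ is a homotopy equivalence.

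Since $|i|\circ h = h'$ with $h$ and $h'$ both homotopy equivalences, $|i|$ itself is a homotopy equivalence, and therefore induces isomorphisms on all homotopy and homology groups, as required. The only delicate step is the justification that $h'$ — built from a partition of unity originally chosen for the finer cover $\U$ — is still a homotopy equivalence to $|\N(\U')|$; the straight-line argument above, relying crucially on the containment $U_i\subseteq U_i'$ to keep the interpolation inside $|\N(\U')|$, handles this.
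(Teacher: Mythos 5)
Your proof is correct and follows essentially the same route as the paper: both use a single partition of unity subordinate to $\U$ (hence also to $\U'$) to build nerve maps $h$ and $h'$ making the triangle with $|i|$ commute, and conclude by two-out-of-three. The only difference is that you explicitly justify, via the straight-line homotopy to a map built from an arbitrary partition of unity for $\U'$, that $h'$ is still a homotopy equivalence --- a point the paper's proof asserts without comment --- which is a welcome extra bit of care rather than a new idea.
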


\begin{proof}
	Consider the following commutative diagram:
	\begin{equation*}
		\begin{tikzpicture}[baseline=(current  bounding  box.center)]
			\node (k1) at (-2,0) {$\mod{\N(\U)}$};
			\node (k2) at (2,0) {$\mod{\N(\U')}$};
			\node (k3) at (0,-2) {$X$};
			\draw[rinclusion] (k1) to node[auto] {$i$} (1,0);
			\draw[map,swap] (k3) to node[auto,swap] {$h$} (k1);
			\draw[map,dashed,swap] (k3) to node[auto] {$i\circ h$} (k2);
		\end{tikzpicture}
	\end{equation*}
	where the map $h=\sum\varphi_i u_i$ is obtained from an arbitrary partition of
	unity $\{\varphi_i\}$ for $\U$. By the Nerve Lemma (\lemref{nerve}), $h$ is a
	homotopy equivalence (\cite{Kozlov-book}). Since $U_i\subseteq U'_i$,
	$\{\varphi_i\}$ is a partition of unity for~$\U'$. So, $i\circ h$ is also a
	homotopy equivalence. Since the maps $h$ is a homotopy equivalence, we
	conclude that $i$ induces an isomorphism on homology and homotopy groups.
\end{proof}

We now state the following extension of the partition of unity. Follow
\cite{munkres2000topology} for a proof.
\begin{lemma}[Controlled Partition of Unity]
	\label{lem:pou}
	Let $\{U_i\}$ and $\{V_i\}$ be open covers of a paracompact, Hausdorff space
	$X$ such that $\overline{V_i}\subseteq U_i$ for each $i$.  Then, there exists
	a partition of unity $\{\varphi_i\}$ subordinate to $\{U_i\}$ such that
	$V_i\subseteq\supp{\varphi_i}\subseteq U_i$ for all $i$.
\end{lemma}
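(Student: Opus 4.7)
The plan is to build each $\varphi_i$ by blending a Urysohn bump on $\overline{V_i}$ with a standard partition of unity subordinate to $\{U_i\}$, and then normalizing the sum. Since paracompact plus Hausdorff implies normal, for each $i$ I would apply normality to the disjoint closed sets $\overline{V_i}$ and $X\setminus U_i$ to obtain an open set $W_i$ with $\overline{V_i}\subseteq W_i\subseteq\overline{W_i}\subseteq U_i$, and then invoke Urysohn's lemma to produce a continuous $\psi_i:X\to[0,1]$ that equals $1$ on $\overline{V_i}$ and vanishes outside $W_i$. This automatically gives $V_i\subseteq\{\psi_i=1\}$ and $\supp\psi_i\subseteq\overline{W_i}\subseteq U_i$, which takes care of the per-index support containments.

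Next, I would invoke the standard partition-of-unity theorem (available because $X$ is paracompact Hausdorff) applied to the cover $\{U_i\}$, producing a locally finite partition of unity $\{\rho_i\}$ with $\supp\rho_i\subseteq U_i$ and $\sum_i\rho_i\equiv 1$. I would then define
$$\varphi_i \;=\; \frac{\rho_i+\psi_i}{1+\sum_j\psi_j}.$$
A direct verification shows $\sum_i\varphi_i=1$ and $\supp\varphi_i\subseteq\supp\rho_i\cup\supp\psi_i\subseteq U_i$. On $\overline{V_i}$ one has $\psi_i=1$, so the numerator is at least $1$ and $\varphi_i$ is strictly positive there; hence $V_i\subseteq\overline{V_i}\subseteq\{\varphi_i>0\}\subseteq\supp\varphi_i$, giving the required two-sided containment.

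The main obstacle, and the only delicate point, is making sense of the denominator $1+\sum_j\psi_j$: this sum must be a locally finite sum of continuous functions to guarantee that the result is continuous and strictly positive. Since $\supp\psi_j\subseteq U_j$, local finiteness of $\{\supp\psi_j\}_{j\in I}$ would follow from local finiteness of the indexed cover $\{U_i\}$, which is not part of the hypothesis. To remedy this I would pre-process the cover: using paracompactness, take a locally finite open refinement $\{W_\beta\}_\beta$ of $\{U_i\}$ with $W_\beta\subseteq U_{g(\beta)}$, then re-group along $g$ by setting $U'_i=V_i\cup\bigcup_{g(\beta)=i}W_\beta$, which yields $V_i\subseteq U'_i\subseteq U_i$. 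Since the conclusion $V_i\subseteq\supp\varphi_i\subseteq U_i$ together with the local finiteness of $\{\supp\varphi_i\}$ forces $\{V_i\}$ to be point-finite to begin with, the re-grouped family $\{U'_i\}$ is locally finite, and performing the Urysohn construction inside $U'_i$ rather than $U_i$ makes $\{\supp\psi_i\}$ locally finite. With that in place the formula for $\varphi_i$ above yields continuous, nonnegative functions summing to one, and the three support conditions are verified as described.
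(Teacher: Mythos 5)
Your core construction is exactly the paper's: use normality (paracompact Hausdorff $\Rightarrow$ normal) to interpolate an open $W_i$ with $\overline{V_i}\subseteq W_i\subseteq\overline{W_i}\subseteq U_i$, apply Urysohn to get $\psi_i$ equal to $1$ on $\overline{V_i}$ and $0$ off $W_i$, and normalize. The extra ingredient $\rho_i$ and the $+1$ in your denominator are harmless but unnecessary: since $\{V_i\}$ is assumed to be a \emph{cover}, every point lies in some $V_i$ where $\psi_i=1$, so $\sum_j\psi_j\geq 1$ everywhere and one can simply set $\varphi_i=\psi_i/\sum_j\psi_j$, which is what the paper does. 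You are also right that the only delicate point is the local finiteness of $\{\supp\psi_j\}$, needed for continuity of the denominator; the paper's proof merely asserts that $\{W_i\}$ is locally finite, which is justified only if one reads local finiteness of $\{U_i\}$ into the hypotheses (as holds in the lemma's sole application, where the covers are explicitly locally finite).

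Your attempted repair of that point, however, does not work. First, the inference ``the conclusion forces $\{V_i\}$ to be point-finite, hence I may assume it'' is circular: a necessary condition for the statement you are trying to prove cannot be invoked as a hypothesis inside its proof. Second, even granting it, point-finiteness is strictly weaker than local finiteness, so the regrouped family $U'_i=V_i\cup\bigcup_{g(\beta)=i}W_\beta$ need not be locally finite: the regrouped unions $\bigcup_{g(\beta)=i}W_\beta$ form a locally finite family, but adjoining the $V_i$ destroys this unless $\{V_i\}$ itself is locally finite, which is exactly what is not given. Third, $U'_i$ contains $V_i$ but not necessarily $\overline{V_i}$, so the Urysohn step cannot be run inside $U'_i$ without a further shrinking argument. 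The honest fix is simply to add local finiteness of $\{U_i\}$ (or of $\{V_i\}$, whence one intersects the $W_i$ with a locally finite refinement indexed compatibly) to the hypotheses; with that in place your construction, like the paper's, goes through.
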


We now use the controlled partition of unity to prove the following important
lemma.
\begin{lemma}[Commuting Diagram]\label{lem:commute}
	Let $X, Y$ be paracompact, Hausdorff spaces with a continuous map~$f:X\to
	Y$. Let $\U=\{U_i\}$ and $\V=\{V_i\}$ be good, locally finite, open covers
	of $X$ and $Y$ respectively, such~that
	\begin{enumerate}[(a)]
		\item $\bigcap_i V_i\neq\emptyset$ implies $\bigcap_i U_i\neq\emptyset$, i.e.,
		we have the simplicial inclusion $j:\N(\V)\to\N(\U)$ that sends the vertex
		corresponding to $V_i$ to the vertex corresponding to $U_i$,
		\item $\overline{f^{-1}(V_i)}\subseteq U_i$ for all $i$.
	\end{enumerate}

	Then, the following diagram commutes, up to homotopy:
	\begin{equation*}
		\begin{tikzpicture} [scale=1, baseline=(current  bounding  box.center)]
			\node (k1) at (-2,0) {$\mod{\N(\V)}$};
			\node (k2) at (2,0) {$\mod{\N(\U)}$};
			\node (k3) at (-2,-2) {$Y$};
			\node (k4) at (2,-2) {$X$};
			\draw[rinclusion] (k1) to node[auto] {$j$} (k2);
			\draw[map] (k3) to node[auto] {$h_Y$} (k1);
			\draw[map] (k4) to node[auto,swap] {$h_X$} (k2);
			\draw[map] (k4) to node[auto,swap] {$f$} (k3);
		\end{tikzpicture}
	\end{equation*}
	where $h_X,h_Y$ are homotopy equivalences from \eqnref{nerve}.
\end{lemma}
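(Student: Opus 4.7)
The strategy is to construct the two nerve maps $h_X,h_Y$ from compatible partitions of unity so the outer square commutes strictly, and then appeal to the fact that any two nerve maps coming from different partitions of unity subordinate to the same cover are homotopic. Hypothesis (a) provides the simplicial inclusion $j$ on vertices, and hypothesis (b) is exactly what is needed to pull back a partition of unity from $\V$ to one subordinate to $\U$.

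First, by paracompactness of $Y$, I would fix a partition of unity $\{\psi_i\}$ subordinate to $\V$ and set $h_Y(y)=\sum_i \psi_i(y)\,v_i$. On $X$, define $\varphi_i := \psi_i\circ f$. Continuity of each $\varphi_i$ is immediate, as is $\sum_i \varphi_i(x)=1$. The essential step is to verify that $\{\varphi_i\}$ is subordinate to $\U$: since $\{x:\psi_i(f(x))\neq 0\}\subseteq f^{-1}(\supp(\psi_i))\subseteq f^{-1}(V_i)$, taking closures and invoking hypothesis (b) yields $\supp(\varphi_i)\subseteq\overline{f^{-1}(V_i)}\subseteq U_i$. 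Local finiteness of $\{\supp(\varphi_i)\}$ follows from local finiteness of $\V$ together with continuity of $f$.

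Next, define $h_X(x):=\sum_i \varphi_i(x)\,u_i$, where $u_i$ is the vertex of $\N(\U)$ corresponding to $U_i$; by the Nerve Lemma (\lemref{nerve}) this is a homotopy equivalence. Using that $j(v_i)=u_i$ (from hypothesis (a)), a direct computation gives
\[
(j\circ h_Y\circ f)(x)=\sum_i \psi_i(f(x))\,u_i=\sum_i \varphi_i(x)\,u_i=h_X(x),
\]
so the diagram commutes on the nose for this particular choice of maps. For arbitrary nerve maps $h_X', h_Y'$ produced from other partitions of unity subordinate to $\U$ and $\V$, one has $h_X'\simeq h_X$ and $h_Y'\simeq h_Y$ via the straight-line homotopy $H(x,t)=\sum_i \bigl((1-t)\varphi_i(x)+t\varphi_i'(x)\bigr)\,u_i$. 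This $H$ is well-defined in $\mod{\N(\U)}$ because, at every $(x,t)$, the finite set of indices carrying positive weight spans a simplex of $\N(\U)$: each such index $i$ satisfies $x\in U_i$ by subordination, so $x$ itself lies in the intersection of the relevant $U_i$, certifying that this index set is a simplex.

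The main obstacle is the support computation in the second step: without hypothesis (b) promoting $f^{-1}(V_i)\subseteq U_i$ to a closure containment, the pulled-back partition of unity would fail to be subordinate to $\U$ (its support could leak out of $U_i$), and the candidate $h_X$ would not land in $\mod{\N(\U)}$. Everything else is bookkeeping with the Nerve Lemma, together with the standard observation that the homotopy class of a nerve map does not depend on the choice of partition of unity.
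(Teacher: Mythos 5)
Your proof is correct, but it is organized differently from the paper's. The paper routes the argument through its Controlled Partition of Unity lemma (\lemref{pou}): using Urysohn's lemma it builds a partition of unity $\{\psi_i\}$ on $X$ subordinate to $\U$ whose supports are forced to contain the sets $f^{-1}(V_i)$, and then shows that $j\circ h_Y\circ f$ and $h_X$ send each point into a common simplex of $\N(\U)$, so a straight-line homotopy connects them. You instead pull the partition of unity back along $f$, setting $\varphi_i=\psi_i\circ f$, and use hypothesis (b) exactly once to check $\supp(\varphi_i)\subseteq\overline{f^{-1}(V_i)}\subseteq U_i$; this makes the square commute on the nose for that choice of $h_X$, after which you only need the standard fact that nerve maps arising from different partitions of unity subordinate to the same cover are straight-line homotopic. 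Your route is arguably cleaner: it avoids \lemref{pou} (and hence normality and Urysohn) entirely, and it isolates where hypothesis (b) is genuinely used. What the paper's version buys in exchange is that $h_X$ is an ``independently chosen'' nerve map with controlled supports rather than the specific pullback, so it never needs the choice-independence step that you supply at the end --- but you do supply it, correctly, via the same common-simplex convexity argument the paper uses for its homotopy. The only points worth tightening are minor: the local finiteness of $\{\supp(\varphi_i)\}$ deserves the one-line check that an open set meets a closure iff it meets the underlying set, and the final homotopy should also be stated (or invoked by symmetry) for the $Y$-side map $h_Y$, with the observation that post-composition with $j$ and pre-composition with $f$ preserve homotopies. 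Neither affects the validity of the argument.
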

\begin{proof}
	We make use of the controlled partition of unity lemma to prove our
	result. Let us choose a partition of unity $\{\phi_i\}$ subordinate to
	$\{V_i\}$. One can choose $h_Y$ so that for each~$y\in Y$,
	\[
	h_Y(y)=\sum_i\phi_i(y)v_i,
	\]
	where $v_i$ is the vertex of $\N(\V)$ corresponding to $V_i$.

	Since $\{f^{-1}(V_i)\}$ is an open cover of $X$ with
	$\overline{f^{-1}(V_i)}\subseteq U_i$ for each $i$, by \lemref{pou} we can
	choose a partition of unity $\{\psi_i\}$ subordinate to $\{U_i\}$ such that
	for each $i$
	\[
	f^{-1}(V_i)\subseteq\supp{\psi}_i.
	\]
	Also, choose $h_x$ such that for each $x\in X$
	\[
	h_X(x)=\sum_i\psi_i(x)u_i,
	\]
	where $u_i$ is the vertex of $\N(\U)$ corresponding to $U_i$.

	To see that the diagram commutes, up to homotopy, it suffices to show that $(j
	\circ h_Y \circ f)$ is homotopic to $h_X$. We start with a point $x_0\in X$
	\[(j \circ h_Y \circ f)(x_0) = j\big(\sum_i\phi_i(f(x_0))v_i\big) =
	\sum_i\phi_i(f(x_0))j(v_i) = \sum_i\phi_i(f(x_0))u_i.\]
	On the other hand,
	$h_X(x_0)=\sum_i\psi_i(x_0)u_i$. Now if $\phi_i(f(x_0))$ is non-zero for some
	$i$, then~$f(x_0)\in V_i$, and consequently $x_0\in f^{-1}(V_i)\subseteq
	U_i$. From our choice of the support of $\psi_i$ and $\psi_i(x_0)$ has to be
	non-zero. This shows that both $(j \circ h_Y \circ f)(x_0)$ and $h_X(x_0)$ lie
	in an (open) simplex of $\N(\V)$. Due to convexity of simplices, the following
	(straight-line) homotopy is well-defined:
	\[
	F(x,t) = \sum_i\left[t\psi_i(x) + (1-t)\phi_i(x)\right]u_i.
	\]
	This shows that $(j \circ h_Y \circ f)$ is homotopic to $h_X$.
\end{proof}

Now we are in a position to prove our reconstruction result for \v{C}ech
complexes.

\begin{theorem}[Reconstruction via \v{C}ech complex]\label{thm:Cech-persistence}
	Let $X$ be a geodesic subspace of~$\R^N$ with a positive convexity
	radius~$\rho$ and finite distortion $\delta$. Let $S$ be a finite subset
	of~$\R^N$, and let $\eps$ be a positive number such that
	\[d_H(X,S)<\eps<\frac{\rho}{2\delta(4\delta+1)}.\] Then, any non-negative
	integer $k$ we have the following isomorphism
	\begin{equation}\label{eq:Cech-persistence}
	H_k(X)\cong \operatorname{im}\bigl(j_\ast:H_k(\C_\eps(S))\inclusion H_k(\C_{(4\delta+1)\eps}(S))\bigr)
	\end{equation}
	where $j_\ast$ is induced by the simplicial inclusion
	$j:\C_\eps(S)\longrightarrow\C_{(4\delta+1)\eps}(S)$.
\end{theorem}

\begin{proof}
	We first note from $d_H(X,S)<\eps$ and \lemref{hausdorff} that there is a map
	$\xi:S\to X$ such that for each~$s\in S$,
	\begin{equation}\label{eqn:xi}
		\norm{s-\xi(s)}_2<\eps.
	\end{equation}

	Let $X'=\xi(S)$. Then, \eqnref{xi} implies $d_H(S,X')<\eps$, hence
	$d_H(X,X')<2\eps$ by the triangle inequality.

	We now derive the following chain of simplicial maps:
	\begin{equation*}\label{eqn:chain}
		\C_{\eps}(S)\map[\phi_1] \C^L_{4\eps\delta}(X')\map[\phi_2]
		\C_{(4\delta+1)\eps}(S)\map[\phi_3] \C^L_{2\delta(4\delta+1)\eps}(X').
	\end{equation*}

	The first map $\phi_1$ is the composition of the simplicial map
	$\C_\eps(S)\inclusion\C_{2\eps}(X')$ from \lemref{hausdorff} (due to
	$d_H(S,X')<\eps$) and the simplicial inclusion
	$\C_{2\eps}(X')\inclusion\C^L_{4\delta\eps}(X')$ from \lemref{cech-intrinsic}.

	Similarly, starting with $\C^L_{4\delta\eps}(X')$ and composing maps from
	\lemref{cech-intrinsic} and \lemref{hausdorff}, respectively, we get the
	second simplicial map $\phi_2$. The other map $\phi_3$ is also
	obtained repeating the exact same argument for a different scale as for
	$\phi_1$.

	We first observe that the choice of simplicial maps in \lemref{cech-intrinsic}
	and \lemref{hausdorff} makes~$\phi_2\circ\phi_1$ contiguous to the given
	natural inclusion $j$ of $\C_{\eps}(S)$ into
	$\C_{2\delta(4\delta+1)\eps}(S)$.  We now consider the following~diagram:
	\begin{equation}
		\label{eqn:cech-rectangles}
		\centering
		\begin{tikzpicture} [baseline=(current  bounding  box.center)]
			\node (k1) at (-6,0) {$\mod{\C_{\eps}(S)}$};
			\node (k2) at (-2,0) {$\mod{\C^L_{4\delta\eps}(X')}$};
			\node (k3) at (2,0) {$\mod{\C_{(4\delta+1)\eps}(S)}$};
			\node (k4) at (6,0) {$\mod{\C^L_{2\delta(4\delta+1)\eps}(X')}$};
			\node (k5) at (-6,-2) {$S^\eps$};
			\node (k6) at (-2,-2) {$X$};
			\node (k7) at (6,-2) {$X$};
			\draw[map] (k1) to node[auto] {$\phi_1$} (k2);
			\draw[map] (k2) to node[auto] {$\phi_2$} (k3);
			\draw[map] (k3) to node[auto] {$\phi_3$} (k4);
			\draw[map] (k5) to node[auto] {$h_1$} (k1);
			\draw[map] (k6) to node[auto] {$h_2$} (k2);
			\draw[linclusion] (k6) to node[auto,swap] {$i$} (k5);
			\draw[linclusion] (k7) to node[auto,swap] {$Id$} (k6);
			\draw[map] (k7) to node[auto] {$h_3$}(k4);
		\end{tikzpicture}
	\end{equation}
	To show that the diagram commutes up to homotopy,  we first explain the
	horizontal maps in the bottom row of \eqnref{cech-rectangles}. Since
	$d_H(X,S)<\eps$, we get the first inclusion $X\subseteq S^\eps$.  The three
	vertical maps are homotopy equivalences that come from the Nerve Lemma
	(\lemref{nerve}) for various good open covers as constructed in
	\lemref{commute}. The first vertical map~$h_1$ is obtained for the open
	cover $\U_1=\{\B(x,\eps)\}_{x\in S}$ of $S^\eps$ by Euclidean balls. The
	other two vertical maps,~$h_2$ and $h_3$, are corresponding to the
	(intrinsic) covers~$\U_2$ and~$\U_3$ of~$(X,d_L)$ by the intrinsic balls of
	radii $2\delta\eps$ and $4\delta(2\delta+1)\eps$, respectively. The
	assumption~$4\delta(2\delta+1)\eps<\rho$ implies that they are indeed good
	(intrinsic) covers of $X$. Therefore, by \lemref{Cech-equivalence} we get
	the homotopy equivalences $h_2$ and $h_3$.

	Apply \lemref{commute} to each of the rectangles in \eqnref{cech-rectangles}
	to show that the diagram is homotopy commutative, and therefore it commutes
	on the homology level. The commutativity then implies that $\phi_1$ induces
	a surjective homomorphism and $\phi_2$ induces an injective homomorphism on
	the homology groups. As a consequence,
	$\im(\phi_{2*}\circ\phi_{1*})=\im(\phi_{2*})=H_k(X)$ on the $k$-th homology
	group.  Also, we note that $\phi_2\circ\phi_1$ is homotopic to the given
	simplicial inclusion $j$.

	To see that the first rectangle commutes, we consider the covers $\U_1$ and
	$\U_2$ of $S^\eps$ and~$(X,d_L)$. Note that for any $x\in S$, the choice of
	$\xi(x)$ implies that $i^{-1}(\B(x,\eps))=\B(x,\eps)\cap
	X\subseteq\B^L(\xi(x),2\delta\eps)$. Consequently, $\overline{\B(x,\eps)\cap
		X}\subseteq\B^L(\xi(x),4\delta\eps)$. A similar argument also
	applies to other rectangle. Therefore by \lemref{commute}, the diagram
	\eqnref{cech-rectangles} commutes.
\end{proof}

\begin{remark}\label{rem:functor} We remark that \thmref{Rips-persistence} and
	\thmref{Cech-persistence} of this section can be formulated in terms of any
	natural functor from the category of topological spaces (with continuous
	maps as morphisms) to the category of groups (with group homomorphisms). In
	particular, the results extend immediately to homology groups
	$H_\ast(\,\cdot\, ; G)$ with coefficients in any abelian group $G$, or
	homotopy groups $\pi_\ast(\,\cdot\,)$.
\end{remark}

\section{Geometric Reconstruction}\label{sec:geom-recon}
In the previous section, we used filtrations of both the \v{C}ech and the
Vietoris-Rips complexes to compute the homology and homotopy groups of our
hidden geodesic subspace $X$ from a noisy sample $S$ around it. The results,
however, do not provide us with a topological space that faithfully carries the
topology of~$X$. To remedy this, we consider the problem of geometric
reconstruction of geodesic subspaces.

In \subsecref{fundamental}, we introduce a new metric $d_\eps$ on $S$. As our
first step towards capturing the homotopy type, we show in \thmref{fundamental}
that the Vietoris-Rips complex of $(S,d_\eps)$ and $X$ have isomorphic
fundamental groups.  Finally in \subsecref{graphs}, we further use this complex
for the geometric reconstruction of embedded graphs.

\subsection{Recovery of the Fundamental Group}
\label{subsec:fundamental}
For any fixed $\eps>0$, we first consider the Euclidean Vietoris-Rips complex
$\Ri_\eps(S)$ on the sample $S$. Regardless of how dense the sample $S$ is,
$\Ri_\eps(S)$ is not guaranteed to be homotopy equivalent to $X$ in general; as
shown in \figref{recon}. This is not surprising, because the Euclidean metric on
$S$, used to compute the complex, can be very different from the length
metric~$d_L$ on~$X$. Our goal is to approximate $d_L$ by the shortest path
metric, denoted $d_\eps$, on the one-skeleton of $\Ri_\eps(S)$. Let us denote
the one-skeleton of $\Ri_\eps(S)$ by $G_\eps$. Since $\Ri_\eps(S)$ is an
abstract simplicial complex,~$G_\eps$ inherits the structure of an abstract
graph. However, we turn its geometric complex $\mod{G_\eps}$ into a metric graph
by defining the metric $d_\eps$ on it in the following way: the metric, when
restricted to an edge $(s,t)$, is isometric to a real interval of length
$\norm{s-t}_2$.

We show in \lemref{d-eps} that $d_\eps$ nicely approximates the metric $d_L$,
which the Euclidean sample is oblivious to. For any positive scale $\alpha$, we
denote the Vietoris-Rips complex of $S$ in the $d_\eps$ metric
by~$\Ri^\eps_\alpha(S)$. The metric $d_\eps$ can be computed in $O(k^3)$-time
from a sample~$(S,d_2)$ of size~$k$. In the following lemma, we compare
the metric $d_\eps$ with the standard Euclidean metric~$d_2$ and the
length metric $d_L$.

\begin{lemma}[Minimal Covering of Paths]\label{lem:d-eps} Let $X$ be a geodesic
	subspace of $\R^N$. Let~$S$ be a subset of $\R^N$ and~$\eps>0$ such that
	$d_H(X,S)<\frac{\eps}{3}$. For any path $\gamma$ joining any two
	points~$x,y\in X$, we can find a sequence~$\{a_i\}_{i=0}^k\subseteq S$ with
	$\norm{a_{i+1}-a_i}_2<\eps$ such that
	$$\sum\limits_{i=0}^{k-1}\norm{a_{i+1}-a_{i}}_2<3l(\gamma).$$ Moreover, $a_0$
	and $a_k$ can be chosen to be any points with $\norm{x-a_0}_2<\frac{\eps}{3}$
	and $\norm{y-a_K}_2<\frac{\eps}{3}$.
\end{lemma}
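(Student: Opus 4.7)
The plan is to discretize the path $\gamma$ into short chord-pieces and replace each partition endpoint by a nearby point of $S$, using the Hausdorff density assumption. Parametrize $\gamma$ by arc-length on $[0, l(\gamma)]$ and construct a partition $0 = t_0 < t_1 < \ldots < t_k = l(\gamma)$ whose chord-lengths $\norm{\gamma(t_{i+1}) - \gamma(t_i)}$ are each strictly less than $\eps/3$. Such a partition exists by continuity of $\gamma$; a concrete choice is to take successive $t_i$ so that arc-length pieces have size at most $\eps/3$ (since chord-length is bounded by arc-length).

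For each interior index $1 \leq i \leq k-1$, use $d_H(X, S) < \eps/3$ to pick a sample point $a_i \in S$ with $\norm{a_i - \gamma(t_i)} < \eps/3$. Set $a_0$ and $a_k$ to be the prescribed endpoints satisfying $\norm{x - a_0} < \eps/3$ and $\norm{y - a_k} < \eps/3$. The per-edge bound then follows immediately from the triangle inequality,
\[
\norm{a_{i+1} - a_i} \leq \norm{a_{i+1} - \gamma(t_{i+1})} + \norm{\gamma(t_{i+1}) - \gamma(t_i)} + \norm{\gamma(t_i) - a_i} < \eps/3 + \eps/3 + \eps/3 = \eps.
\]

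For the sum, apply the same triangle inequality term-by-term and regroup, noting that each interior $\norm{a_j - \gamma(t_j)}$ contributes to two consecutive edges while the endpoint terms contribute only once:
\[
\sum_{i=0}^{k-1} \norm{a_{i+1} - a_i} < 2\sum_{j=1}^{k-1} \norm{a_j - \gamma(t_j)} + \norm{a_0 - x} + \norm{a_k - y} + \sum_{i=0}^{k-1}\norm{\gamma(t_{i+1}) - \gamma(t_i)} < \frac{2k\eps}{3} + l(\gamma),
\]
where the final chord-sum is bounded by $l(\gamma)$ via the definition of length. Choosing $k$ so that $k \leq 3\, l(\gamma)/\eps$ then gives $\sum \norm{a_{i+1} - a_i} < 3\,l(\gamma)$.

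The main obstacle is pinning down the constant $3$ with a strict inequality. A uniform arc-step of $\eps/3$ produces $k = \lceil 3l(\gamma)/\eps\rceil$, which just barely exceeds $3l(\gamma)/\eps$ whenever the ratio is not an integer, and the naive estimate only yields $3l(\gamma) + O(\eps)$. The fix is to exploit the \emph{strict} Hausdorff inequality $d_H(X,S) < \eps/3$: choose $\eta < \eps/3$ with $X \subseteq S^\eta$, so that each $\norm{a_j - \gamma(t_j)} \leq \eta$. This slack permits chord-step sizes slightly larger than $\eps/3$ without violating the per-edge bound, thereby yielding a partition with $k < 3l(\gamma)/\eps$, and closing the sum estimate with strict inequality.
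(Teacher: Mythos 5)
Your route differs from the paper's in a way that matters. You fix the partition of $\gamma$ in advance (uniform arc-length steps of about $\eps/3$) and then do global triangle-inequality bookkeeping; this inevitably yields $\sum\norm{a_{i+1}-a_i} < \frac{2k\eps}{3}+l(\gamma)$ together with the problem of forcing $k\le 3l(\gamma)/\eps$, which you correctly flag as the weak point. But the proposed repair does not close it. First, the two endpoint terms $\norm{a_0-x}+\norm{a_k-y}$ contribute an additive cost of up to $2\eps/3$ that no choice of $\eta$ can shrink: the lemma requires $a_0,a_k$ to be \emph{arbitrary} points within $\eps/3$ of $x,y$, so the tighter tolerance $\eta$ applies only to the interior samples. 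Your estimate then needs $(k-1)\eta+\frac{\eps}{3}<l(\gamma)$ (taking the chord sum at its worst-case value $l(\gamma)$), which is impossible for $l(\gamma)\le\eps/3$ and, when $d_H(X,S)$ is close to $\eps/3$ (so the best available $\eta$ is close to $\eps/3$), forces a chord step larger than the ceiling $s<\frac{2\eps}{3}-\eta$ imposed by the per-edge requirement $\norm{a_{i+1}-a_i}<\eps$ for a whole range of lengths $l(\gamma)$ comparable to $\eps$. So the constant $3$ is not recovered by this route; the claim ``thereby yielding a partition with $k<3l(\gamma)/\eps$'' is precisely the step that fails.

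The paper avoids the additive slop by making the partition \emph{adaptive} rather than uniform: having chosen $a_i$ with $\norm{\gamma(t_i)-a_i}<\frac{\eps}{3}$, it defines $t_{i+1}$ as the first time after $t_i$ at which $\gamma$ exits the ball $\B_{\frac{2\eps}{3}}(a_i)$. Then $\norm{\gamma(t_{i+1})-a_i}=\frac{2\eps}{3}$, so the chord satisfies $\norm{\gamma(t_i)-\gamma(t_{i+1})}>\frac{2\eps}{3}-\frac{\eps}{3}=\frac{\eps}{3}$, while $\norm{a_{i+1}-a_i}\le\norm{a_{i+1}-\gamma(t_{i+1})}+\norm{\gamma(t_{i+1})-a_i}<\frac{\eps}{3}+\frac{2\eps}{3}=\eps$. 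Each edge of the sample chain is thus paid for, step by step and at ratio $3{:}1$, by the arc of $\gamma$ it traverses, and summing gives $\sum\norm{a_{i+1}-a_i}<3\sum\norm{\gamma(t_i)-\gamma(t_{i+1})}\le 3\,l(\gamma)$ with no leftover $O(\eps)$ term. This per-step comparison is the idea missing from your argument. (To be fair, the paper's own proof is silent about the final segment, where $\gamma$ may terminate inside $\B_{\frac{2\eps}{3}}(a_i)$ and the chord lower bound fails, and about very short paths with adversarial $a_0,a_k$; but the adaptive exit-time partition is the essential mechanism, and without it or some substitute your estimate does not reach the stated bound.)
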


\begin{proof}
	Since $d_H(X,S)<\frac{\eps}{3}$, there exists $a_0\in S$ such that
	$\norm{x-a_0}_2<\frac{\eps}{3}$. We now iteratively define the sequence
	$\{a_i\}\subseteq S$, along with a sequence $\{t_i\}_0^k \subset [0,1]$ that
	defines a partition of $[0,1]$. We set $t_0=0$.  Assuming both $a_i$ and
	$t_i$ are defined, we define $t_{i+1}\in [0,1]$ in the following way: if
	$\gamma([t_i,1])\cap\partial{\B\left(a_i,\frac{2\eps}{3}\right)}\neq\emptyset$,
	we set
	\[t_{i+1}=\min\{t\in[t_i,1]\mid\gamma(t)\in
	\partial{\B\left(a_i,\frac{2\eps}{3}\right)}\}.\] Otherwise if
	$\gamma([t_i,1])\cap\partial{\B\left(a_i,\frac{2\eps}{3}\right)}=\emptyset$,
	set $t_{i+1}=1$. Since $d_H(S,X)<\frac{\eps}{3}$, we set $a_{i+1}\in S$ to
	be a point in $S$ such that
	$\norm{\gamma(t_{i+1})-a_{i+1}}_2<\frac{\eps}{3}$. This procedure forces
	$t_{i+1}$ to be strictly greater than $t_i$, hence $\{t_i\}$ defines a
	partition of $[0,1]$.  Therefore,
	\[
	l(\gamma) = \sum_{i=0}^kl(\gamma |_{[t_i,t_{i+1}]})
	\geq\sum_{i=0}^k\norm{\gamma(t_i)-\gamma(t_{i+1})}_2
	\geq\sum_{i=0}^k\frac{\eps}{3} \geq\frac{1}{3}\sum_{i=0}^k\norm{a_{i+1}-a_i}_2.
	\]

	We also note that
	\[0<\norm{a_{i+1}-{a_i}}_2\leq\norm{a_{i+1}-\gamma(t_{i+1})}_2
	+\norm{\gamma(t_{i+1})-a_i}_2<\frac{\eps}{3}+\frac{2\eps}{3}=\eps.\]
\end{proof}

Analogous to \lemref{hausdorff}, we get the following useful simplicial maps.
\begin{lemma}[Vietoris-Rips Inclusion by $d_\eps$]\label{lem:dL-inclusion}
	Let $X$ a geodesic subspace $X\subseteq\R^N$. Let $S\subseteq\R^N$ and
	$\eps>0$ be such that $d_H(X,S)<\frac{\eps}{3}$. For any $\alpha>0$,
	\begin{enumerate}[(i)]
		\item there exists a natural simplicial inclusion
		\[
		\Ri^\eps_{\alpha}(S)\inclusion\Ri_\alpha(S).
		\]
		\item
		there exists a simplicial map
		\[
		\xi:\Ri^L_\alpha(X)\map\Ri^\eps_{3\alpha}(S)
		\]
		induced by the vertex map $\xi$ that sends a vertex $x\in X$ to $s\in S$
		such that $\norm{x-s}_2<\frac{\eps}{3}$.
	\end{enumerate}
\end{lemma}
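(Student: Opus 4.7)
The plan is to prove the two parts essentially independently: part (i) is a direct consequence of the way $d_\eps$ is defined on $\mod{G_\eps}$ as a shortest-path metric with Euclidean edge lengths, while part (ii) is the substantive step and will be reduced to a single application of the Minimal Covering of Paths lemma (\lemref{d-eps}).

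For part (i), I would observe that every edge of $G_\eps$ has $d_\eps$-length equal to its Euclidean length, so any $d_\eps$-path between two sample points $s,t\in S$ is a piecewise linear curve in $\R^N$ whose total Euclidean length bounds $\norm{s-t}$ from above by the triangle inequality in $\R^N$. Hence $\norm{s-t}\le d_\eps(s,t)$ for all $s,t\in S$. Consequently, any simplex of $\Ri^\eps_\alpha(S)$, having $d_\eps$-diameter less than $\alpha$, also has Euclidean diameter less than $\alpha$, and the set-theoretic identity on $S$ extends to the asserted simplicial inclusion $\Ri^\eps_\alpha(S)\inclusion\Ri_\alpha(S)$.

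For part (ii), the hypothesis $d_H(X,S)<\eps/3$ guarantees that for each $x\in X$ there exists some $s\in S$ with $\norm{x-s}<\eps/3$; fix one such $s=:\xi(x)$ for every $x$. The content of the lemma is the preservation of simplices. Let $\sigma=\{x_0,\dots,x_k\}$ be a simplex of $\Ri^L_\alpha(X)$, so that $d_L(x_i,x_j)<\alpha$ for all $i,j$. For a fixed pair $i,j$, I would choose a rectifiable path $\gamma_{ij}$ from $x_i$ to $x_j$ with $L(\gamma_{ij})<\alpha$ (possible since $X$ is a geodesic subspace). Applying \lemref{d-eps} to $\gamma_{ij}$, and using the "moreover" clause to force the endpoint choices $a_0:=\xi(x_i)$ and $a_K:=\xi(x_j)$, produces a sequence $a_0,\dots,a_K\in S$ with $\norm{a_{m+1}-a_m}<\eps$ and
\[
\sum_{m=0}^{K-1}\norm{a_{m+1}-a_m}<3L(\gamma_{ij})<3\alpha.
\]

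Because consecutive samples are within $\eps$ of each other, each pair $(a_m,a_{m+1})$ is an edge of $G_\eps$, so the concatenation is a path in $\mod{G_\eps}$ joining $\xi(x_i)$ to $\xi(x_j)$ of $d_\eps$-length strictly less than $3\alpha$. Therefore $d_\eps(\xi(x_i),\xi(x_j))<3\alpha$ for all $i,j$, showing that $\xi(\sigma)$ is a simplex of $\Ri^\eps_{3\alpha}(S)$. The only subtlety—more bookkeeping than obstacle—is aligning the already-fixed vertex map $\xi$ with the freedom in the endpoints granted by \lemref{d-eps}; this is precisely what the "moreover" clause provides, so the proof goes through without any additional estimate.
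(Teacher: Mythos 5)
Your proposal is correct and follows essentially the same route as the paper: part (i) from the observation that $d_\eps$ dominates the Euclidean metric by construction, and part (ii) by extending the Hausdorff-induced vertex map $\xi$ to simplices via a direct application of \lemref{d-eps} (with its ``moreover'' clause fixing the endpoints $\xi(x_i),\xi(x_j)$). Your write-up simply supplies more of the bookkeeping that the paper leaves implicit.
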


\begin{proof}
	\begin{enumerate}[(i)]
		\item Follows immediately from the definition of the metric $d_\eps$.

		\item
		As observed before in \lemref{hausdorff}, the assumption
		$d_H(X,S)<\frac{\eps}{3}$ ensures that there is a vertex map $\xi:X\to S$
		such that for each $x\in X$ we have $\norm{x-\xi(x)}_2<\frac{\eps}{3}$.

		We show that the map extends to a simplicial map. Let
		$\sigma=\{x_0,x_1,\cdots,x_k\}$ be a $k$-simplex of $\Ri^L_\alpha(X)$. Then,
		$d_L(x_i,x_j)\leq\alpha~\forall i,j$. Now by \lemref{d-eps}, there exists a
		path joining $\xi(x_i)$ and $\xi(x_j)$ in $G_\eps$, moreover
		$d_\eps(\xi(x_i),\xi(x_j))<3\alpha$. So, $\xi(\sigma)$ is a simplex of
		$\Ri^\eps_{3\alpha}(S)$. Hence, the vertex map extends to a simplicial map.
	\end{enumerate}
\end{proof}

We now show that the fundamental group of the Vietoris-Rips complex on $S$ under
the metric $d_\eps$ is isomorphic to that of $X$. We tolerate the sloppiness
from ignoring the basepoint.
\begin{theorem}[Fundamental Group]\label{thm:fundamental}
	Let $X$ be a connected geodesic subspace of $\R^N$ with a positive convexity
	radius $\rho$ and a finite distortion $\delta$. Let $S\subseteq\R^N$ and
	$\eps>0$ be such that
	\[
	 d_H(X,S)<\frac{\eps}{3}<\frac{\rho}{\delta(15\delta+2)}.
	 \]
	 Then, the
	fundamental groups of $\Ri^\eps_{5\eps\delta}(S)$ and $X$ are isomorphic.
\end{theorem}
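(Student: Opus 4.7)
The plan is to build a three-term chain of simplicial maps in the spirit of the proof of \thmref{Rips-persistence}, sandwiching $\Ri^\eps_{5\eps\delta}(S)$ between two intrinsic Vietoris--Rips complexes of $X$. By \lemref{dL-inclusion}(ii) I obtain a simplicial map $\xi:\Ri^L_{5\eps\delta/3}(X)\to\Ri^\eps_{5\eps\delta}(S)$, and composing \lemref{dL-inclusion}(i), \lemref{hausdorff} (invoked with $d_H(X,S)<\eps/3$), and \lemref{rips-intrinsic} yields a simplicial map $\phi:\Ri^\eps_{5\eps\delta}(S)\to\Ri^L_{\eps\delta(15\delta+2)/3}(X)$. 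The hypothesis $\eps/3<\rho/(\delta(15\delta+2))$ ensures that both intrinsic scales $\alpha_1=5\eps\delta/3$ and $\alpha_2=\eps\delta(15\delta+2)/3$ lie strictly below~$\rho$, so \corref{hausmann} applies on both ends.

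The contiguity clause of \lemref{hausdorff} shows that $\phi\circ\xi$ is contiguous to the natural simplicial inclusion $\Ri^L_{\alpha_1}(X)\hookrightarrow\Ri^L_{\alpha_2}(X)$, which induces an isomorphism on $\pi_1$ by \corref{hausmann}. Consequently $\xi_*$ is injective and $\phi_*$ is surjective on $\pi_1$, realizing $\pi_1(X)$ as a section-retract factor of $\pi_1(\Ri^\eps_{5\eps\delta}(S))$. To upgrade this sandwich to an isomorphism, I use the Hausmann equivalences $T_i\colon\mod{\Ri^L_{\alpha_i}(X)}\to X$ to assemble $g=\mod{\xi}\circ T_1^{-1}\colon X\to\mod{\Ri^\eps_{5\eps\delta}(S)}$ and a candidate inverse $f=T_2\circ\mod{\phi}$. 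The easy direction $f\circ g\simeq\mathrm{id}_X$ follows from the compatibility of $T_1,T_2$ with the scale inclusion $\Ri^L_{\alpha_1}(X)\hookrightarrow\Ri^L_{\alpha_2}(X)$ together with the contiguity $\phi\circ\xi\sim\text{inclusion}$ already established.

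The reverse composition $g\circ f\simeq\mathrm{id}_{\mod{\Ri^\eps_{5\eps\delta}(S)}}$ is the main obstacle. At the vertex level $g\circ f$ shifts a sample point $s\in S$ by Euclidean distance at most $2\eps/3$, so $s$ and $g(f(s))$ lie in a common edge of $\Ri^\eps_{5\eps\delta}(S)$; however at the simplex level a naive straight-line homotopy can fail, because for a simplex $\sigma$ of $d_\eps$-diameter close to $5\eps\delta$, the enlarged vertex set $\sigma\cup(g\circ f)(\sigma)$ may have $d_\eps$-diameter up to $5\eps\delta+4\eps/3$. The remedy is a refinement drawn from \lemref{d-eps}: every $d_\eps$-edge of $\sigma$ decomposes along a $d_\eps$-geodesic into arbitrarily many short $G_\eps$-edges whose pairwise $d_\eps$-distances can be made as small as we wish, so after subdividing finely enough each sub-simplex has $d_\eps$-diameter below $5\eps\delta-4\eps/3$, and then the enlarged vertex set again spans a simplex of $\Ri^\eps_{5\eps\delta}(S)$. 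The straight-line homotopy, performed on this refined triangulation, realizes the equivalence simplicially and completes the proof that $\pi_1(\Ri^\eps_{5\eps\delta}(S))\cong\pi_1(X)$.
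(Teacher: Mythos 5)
Your first paragraph and the injectivity half of your argument coincide with the paper's: the chain $\Ri^L_{5\eps\delta/3}(X)\to\Ri^\eps_{5\eps\delta}(S)\to\Ri^L_{\delta(15\delta+2)\eps/3}(X)$, the contiguity of the composite with the scale inclusion, and \corref{hausmann} give exactly that $\xi_*$ is injective on $\pi_1$ (your $\xi$ is the paper's $\phi_2$). The divergence, and the gap, is in how you obtain surjectivity of $\xi_*$. You try to promote the retract to an isomorphism by assembling a genuine homotopy equivalence $f,g$ from the Hausmann maps $T_1,T_2$ and proving $g\circ f\simeq\mathrm{id}$. Two things break. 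First, $g=\mod{\xi}\circ T_1^{-1}$ is not a map you can control pointwise: Hausmann's theorem supplies a homotopy inverse of $T_1$ abstractly, not by a formula, so the assertion that $g(f(s))$ lies within Euclidean distance $2\eps/3$ of $s$ has no justification (the naive vertex-level section $x\mapsto x$ of $T_1$ is not continuous). Second, even granting vertex-level control, the subdivision remedy fails: subdivision points must themselves be sample points, and consecutive vertices on a $d_\eps$-geodesic are at $d_\eps$-distance up to $\eps$, so their pairwise distances cannot ``be made as small as we wish''; moreover the remedy is described only for edges, whereas a homotopy $g\circ f\simeq\mathrm{id}$ on $\mod{\Ri^\eps_{5\eps\delta}(S)}$ must be defined on all simplices, and a Rips simplex of large $d_\eps$-diameter does not decompose into smaller Rips simplices with vertices in $S$. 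Note also that you are attempting to prove strictly more than the theorem claims (a homotopy equivalence rather than a $\pi_1$-isomorphism); the paper never asserts the former.

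The paper's route to surjectivity is more elementary and avoids all of this. Since the composite $\Ri_\eps(S)\to\Ri^L_{5\eps\delta/3}(X)\to\Ri^\eps_{5\eps\delta}(S)$ is contiguous to the natural inclusion $i:\Ri_\eps(S)\inclusion\Ri^\eps_{5\eps\delta}(S)$, it suffices to show that $i_*$ is onto $\pi_1$. Given an edge-loop $\eta$ in $\Ri^\eps_{5\eps\delta}(S)$, each edge $\{a,b\}$ with $d_\eps(a,b)<5\eps\delta$ is realized by a shortest path $a=x_0,\dots,x_k=b$ of $G_\eps$-edges; all the $x_i$ lie pairwise within $d_\eps$-distance $5\eps\delta$, so $\{x_0,\dots,x_k\}$ spans a single simplex of $\Ri^\eps_{5\eps\delta}(S)$, and a straight-line homotopy inside that simplex replaces $[a,b]$ by the refined path. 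The refined loop $\eta'$ is a loop of $\Ri_\eps(S)$ with $i(\eta')\simeq\eta$, which is precisely the surjectivity you need. If you keep your first paragraph and replace the homotopy-equivalence construction with this loop-lifting step, you recover the paper's proof.
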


\begin{proof}
	We derive the following chain of simplicial maps:
	\begin{equation*}
		\Ri_{\eps}(S)\map[\phi_1] \Ri^L_{\frac{5\eps\delta}{3}}(X)\map[\phi_2]
		\Ri^\eps_{5\delta\eps}(S)\inclusion[\phi_3] \Ri_{5\delta\eps}(S)\map[\phi_4]
		\Ri^L_{\delta(15\delta+2)\eps/3}(X).
	\end{equation*}
	The map $\phi_1$ is the composition of the simplicial map
	$\Ri_\eps(S)\map\Ri_{\frac{5\eps}{3}}(X)$ from \lemref{hausdorff} and the
	simplicial inclusion
	$\rips{X}{\frac{5\eps}{3}}\inclusion\Ri^L_{\frac{5\eps\delta}{3}}(S)$ from
	\lemref{rips-intrinsic}, thanks to the assumption
	$d_H(S,X)<\frac{\eps}{3}$. By a similar composition but at different scales,
	we get $\phi_4$.  We also obtain $\phi_2$ from \lemref{dL-inclusion} and
	$\phi_3$ from \lemref{dL-inclusion}.

	We argue that $\phi_2$ induces the desired isomorphism on the fundamental
	groups. By \thmref{Rips-persistence} and
	since~$\eps<\frac{\rho}{\delta(15\delta+2)}$, the simplicial map
	$\phi_4\circ\phi_3\circ\phi_2$ induces an isomorphism on all homotopy
	groups. Therefore, $\phi_2$ induces an injective homomorphism on the
	homotopy groups, particularly the fundamental group of $X$.

	We now show that the induced homomorphism is also surjective on the
	fundamental groups by showing that $\phi_2\circ\phi_1$ induces a
	surjection. As observed \thmref{Rips-persistence}, it suffices to show the
	surjection for the the natural inclusion
	$i:\Ri_{\eps}(S)\inclusion\Ri^\eps_{5\delta\eps}(S)$, because $i$ is
	contiguous to $\phi_2\circ\phi_1$.

	We start with a loop $\eta$ in $\Ri^\eps_{5\delta\eps}(S)$. We can assume that
	$\eta$ is made up of edges (one-simplices) of~$\Ri^\eps_{5\delta\eps}$. Let us
	consider an edge $\sigma=\{a,b\}$ in $\eta$, then we have
	$d_\eps(a,b)\leq5\delta\eps$. By the definition of $d_\eps$, there must be a
	sequence of points $a=x_0,x_1,\cdots,x_k=b$ such that for each $i$, the
	segment $[x_i,x_{i+1}]$ is an edge of $\Ri_\eps(S)$. Moreover, we observe for
	later that the diameter of the whole set $\{x_0,\cdots,x_k\}$ in the~$d_\eps$
	metric is not greater than $5\eps\delta$.

	\begin{figure}[thb]
		\centering \includegraphics[scale=1.2]{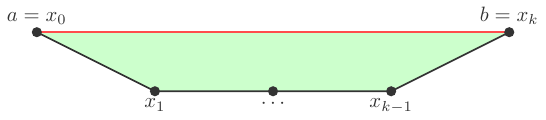}
		\caption{The red one-simplex $[a,b]$ of $\Ri^\eps_{5\delta\eps}(S)$ is shown
			to be pushed off to a path $a=x_0,x_1,\cdots,x_k=b$ in $\Ri_\eps(S)$. All
			the nodes form a simplex (shown in green) in
			$\Ri^\eps_{5\delta\eps}(S)$.}\label{fig:push-off}
	\end{figure}

	Now, we define a loop $\eta'$ in $\Ri_\eps(S)$ by replacing each constituent
	edge $[a,b]$ of $\eta$ by the path joining the points in the sequence
	$a=x_0,x_1,\cdots,x_k=b$ consecutively, as shown in \figref{push-off}.  We
	note that $\eta'$ is indeed a loop in $\Ri_\eps(S)$. We now show that
	$(\phi_2\circ\phi_1)(\eta')$ is homotopic to the loop $\eta$ in
	$\Ri^\eps_{5\delta\eps}(S)$. As observed before, $\{a=x_0,\cdots,x_k=b\}$ is a
	simplex of $\Ri^\eps_{5\delta\eps}(S)$. We can then use a (piece-wise)
	straight line homotopy that maps each edge~$[a,b]$ of $\eta$ to the segment
	$[a=x_0,x_1]\cup\cdots\cup[x_{k-1},x_k=b]$ of $\eta'$. Hence,~$[\eta']$ is, in
	fact, a preimage of~$[\eta]$. This shows, in turn, that $\phi_2$ induces a
	surjective homomorphism on $\pi_1$. This completes the proof.
\end{proof}

\subsection{Reconstruction of Embedded Graphs}
\label{subsec:graphs}
We finally turn our attention to the geometric reconstruction of embedded
graphs.  We start with the formal definition of an embedded graph.
\begin{definition}[Embedded Metric Graph]\label{def:metric-graph} An
	\bemph{embedded metric graph} $G$ is a subset of~$\R^N$ that is homeomorphic
	to a one-dimensional simplicial complex, where the induced length metric~$d_L$ is
        the shortest path distance on $G$. For simplicity of exposition,
        we call such $G$ \emph{embedded
        graphs}.
\end{definition}

We note that if $G$ has finitely many
vertices and $b$ is the length of its shortest simple cycle, then the convexity
radius $\rho$ is $\frac{b}{4}$. In this paper, we always assume that $G$ has
finitely many vertices.  We now consider the \emph{shadow} of the Vietoris-Rips
complex $\Ri^\eps_\bullet(S)$, which is defined in \subsecref{fundamental}.
\begin{definition}[Shadow of a Complex]
	Let $A$ be a subset of $\R^N$, and let~$\K$ be an abstract simplicial
	complex whose vertex set is $A$. For each simplex
	$\sigma=\{x_1,x_2,\ldots,x_k\}$ in $\K$, we define its \bemph{shadow},
	denoted~$\sh(\sigma)$, as the convex-hull of the Euclidean point
        set~$\{x_1,x_2,\ldots,x_k\}$.  The shadow of $\K$ in $\R^N$, denoted by
	$\sh(\K)$, is the union of the shadows of all its simplices, i.e., $\sh(\K)
	:= \bigcup\limits_{\sigma\in\K}\sh(\sigma)$.
\end{definition}
We, therefore, have the following natural projection map $p:\mod{\K}\to\sh(\K)$.
In general,~$\sh(\K)$ may not have the same homotopy type as $|\K|$. However, as
proved in~\cite{Chambers2010}, the fundamental group of the Vietoris-Rips
complex of a planar point set is isomorphic to the fundamental group of
its~shadow. In~\cite{adamaszek_homotopy_2016}, the authors further the
understanding of shadows of Euclidean Rips complexes. In the case of planar
subsets and $\K=\Ri^\eps_\bullet(S)$, we prove a similar result now.

\begin{figure}[thb]
	\centering
	\begin{subfigure}[b]{0.45\textwidth}
		\centering
		\includegraphics[scale=.45]{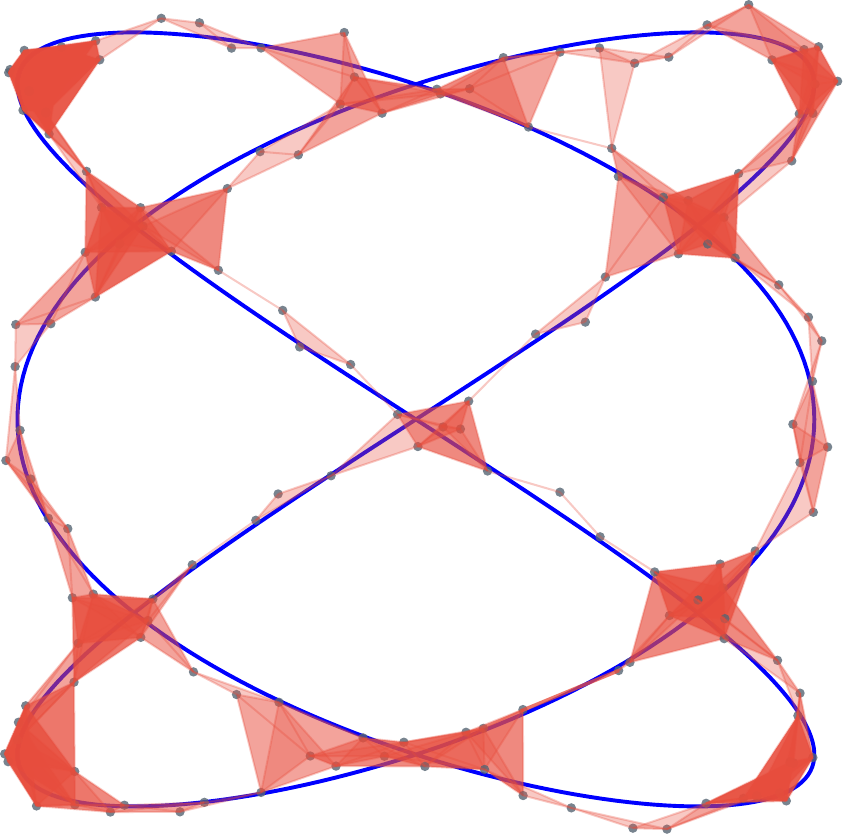}
	\end{subfigure}
	\hfill
	\begin{subfigure}[b]{0.45\textwidth}
		\centering
		\includegraphics[scale=0.45]{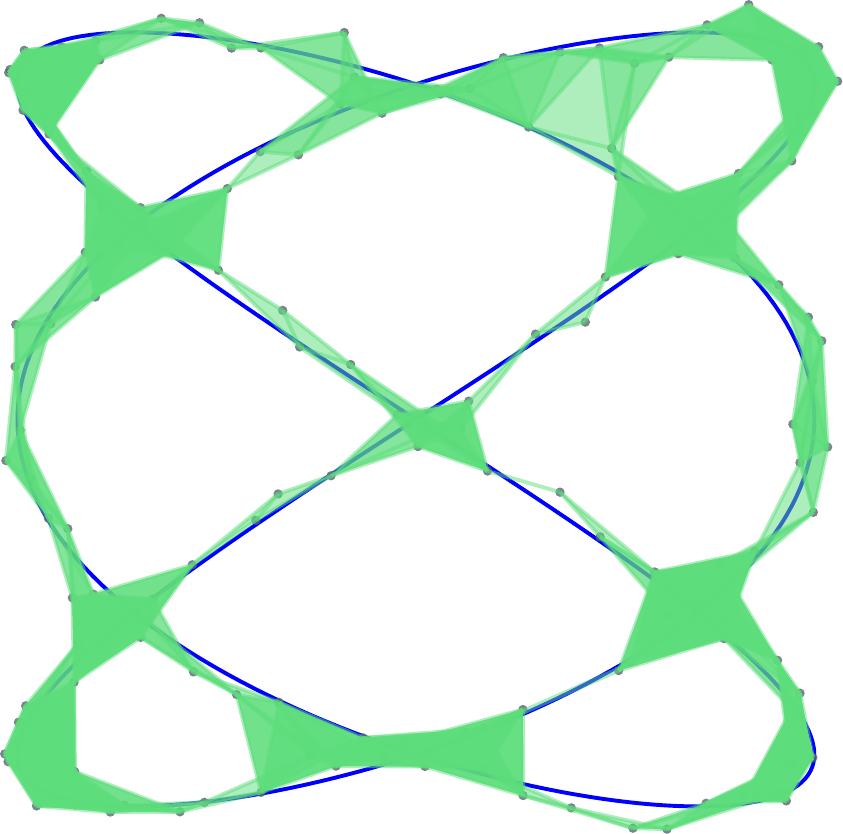}
	\end{subfigure}
	\caption{We implement \algref{graph} on a Lissajous $G$ with $\beta_1(G)=8$.
		On the left, the Euclidean Vietoris-Rips complex $\Ri_\eps(S)$ (in red)
		on an $\eps$-dense sample $S$ of $150$ points fails to
		capture the homotopy type, as its~$\beta_1=9$. On the right, the shadow
		$\widetilde{G}$ (green) of $\Ri^\eps_{5\delta\eps}(S)$ is shown to
		correctly reconstruct $G$. The pictures were generated using the shape
		reconstruction library available on
		\href{https://www.smajhi.com/shape-reconstruction}{\tt
		www.smajhi.com/shape-reconstruction}.}
	\label{fig:recon}
\end{figure}

\begin{lemma}[Shadow]\label{lem:shadow} Let $X$ be a connected planar subspace
	with a positive convexity radius $\rho$ and a finite distortion $\delta$.
	Given $S\subseteq\R^2$ finite and $\eps>0$ such that
	\[
	 d_H(X,S)<\frac{\eps}{3}<\frac{\rho}{\delta(15\delta+2)}.
	 \]
	 Then, the shadow projection
	$p:\mod{\Ri^\eps_{5\eps\delta}(S)}\map\sh(\Ri^\eps_{5\eps\delta}(S))$
	induces isomorphism on the fundamental~groups.
\end{lemma}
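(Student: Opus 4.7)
The plan is to verify that the induced homomorphism $p_*\colon\pi_1(|\K|)\to\pi_1(\sh(\K))$ is both surjective and injective, where $\K:=\Ri^\eps_{5\eps\delta}(S)$. The scheme follows the template of Chambers et al.~\cite{Chambers2010} for planar Vietoris-Rips complexes, adapted here to the intrinsic complex built from the metric $d_\eps$.

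For surjectivity, I would exploit the polyhedral structure of $\sh(\K)\subseteq\R^2$. Any loop $\tau$ in $\sh(\K)$ is homotopic, within $\sh(\K)$, to a polygonal loop $\tau'$ concatenating shadow-edges $p([s,t])$ for $\{s,t\}\in\K$. Each such shadow-edge is the image under $p$ of the corresponding $1$-simplex of $|\K|$, so $\tau'$ lifts to a combinatorial loop $\eta$ in the $1$-skeleton of $|\K|$ with $p\circ\eta=\tau'$; hence $p_*[\eta]=[\tau']=[\tau]$ in $\pi_1(\sh(\K))$.

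For injectivity, suppose $\eta$ is a combinatorial loop in the $1$-skeleton of $|\K|$ whose image $p(\eta)$ is null-homotopic in $\sh(\K)$. Since $\sh(\K)\subseteq\R^2$, the loop $p(\eta)$ bounds a topological disk $D\subseteq\sh(\K)$; the plan is to subdivide $D$ so that every $2$-cell is the shadow of a $2$-simplex of $\K$, and then lift this subdivision to a $2$-chain in $|\K|$ whose boundary is $\eta$, which will force $[\eta]=1$ in $\pi_1(|\K|)$.

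The main obstacle is precisely this realizability step, because a shadow-triangle with vertices $\{a,b,c\}\subseteq S$ need not be a $2$-simplex of $\K$: the simplex relation in $\K$ is governed by the intrinsic metric $d_\eps$, not by Euclidean distance. To overcome this, I would invoke the planarity of $G$ together with the sampling condition $\eps/3<b/(4\delta(15\delta+2))$. Because $d_H(G,S)<\eps/3$, the shadow $\sh(\K)$ is effectively a thin planar thickening of $G$, and any sufficiently fine triangulation of $D$ consists of triangles whose three vertices project near a common short arc of $G$. Applying \lemref{d-eps} to this arc upgrades Euclidean proximity in $S$ to a short $d_\eps$-connection in $G_\eps$, yielding pairwise $d_\eps$-distances below $5\eps\delta$ for the triangle vertices, so each shadow-triangle in the subdivision corresponds to a genuine $2$-simplex of $\K$. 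Concatenating the lifts over all cells produces the desired filling of $\eta$, proving $p_*$ injective.
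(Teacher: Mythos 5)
Your surjectivity argument is essentially the lifting argument of \cite{Chambers2010} that the paper also invokes for that direction, and it is fine in spirit: loops in $\sh(\K)$ can be pushed to concatenations of shadow-edges inside the convex shadow of each simplex and then lifted edge-by-edge to the $1$-skeleton of $|\K|$.

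The injectivity half, however, has a genuine gap, and it is precisely the step you flag as "the main obstacle." Your plan requires triangulating the filling disk so finely that every $2$-cell is the shadow of a $2$-simplex of $\K$, but the vertices of any such triangulation must themselves be points of $S$ (otherwise there is nothing to lift). The shadow $\sh(\K)$ is \emph{not} a thin thickening of $G$: it contains the full Euclidean convex hulls of $2$-simplices whose pairwise $d_\eps$-distances are as large as $5\eps\delta$, and the interiors of these triangles can be far from every sample point (think of three samples spread along an arc of a cycle of $G$). So "any sufficiently fine triangulation of $D$" with vertices in $S$ is simply not available, and the appeal to \lemref{d-eps} does not repair this --- it only controls $d_\eps$ between samples near a common arc, not the existence of samples inside big shadow-triangles. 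A second, smaller issue: producing a $2$-chain with boundary $\eta$ only kills $[\eta]$ in $H_1$, not in $\pi_1$; you would need an actual simplicial filling of a disk. What you are attempting is, in effect, a from-scratch reproof of the main theorem of Chambers et al.\ for the intrinsic complex $\Ri^\eps_{5\eps\delta}(S)$, which is the hard part of their paper. The paper's proof sidesteps all of this: it places $p$ in a commutative square with the Euclidean shadow projection $\widetilde p:\mod{\Ri_{5\delta\eps}(S)}\to\sh(\Ri_{5\delta\eps}(S))$ via the inclusion $\phi_3:\Ri^\eps_{5\delta\eps}(S)\inclusion\Ri_{5\delta\eps}(S)$, observes that $\phi_{3*}$ is $\pi_1$-injective (because $\phi_4\circ\phi_3\circ\phi_2$ and $\phi_2$ are $\pi_1$-isomorphisms by \thmref{fundamental}), and quotes \cite{Chambers2010} for the injectivity of $\widetilde p_*$ on the \emph{Euclidean} planar Rips complex; the composite $\widetilde p_*\circ\phi_{3*}=j_{2*}\circ p_*$ is then injective, forcing $p_*$ to be injective. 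You should adopt that factorization rather than attempt the direct disk-filling.
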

\begin{proof}
	From \thmref{fundamental}, we have the following chain of simplicial
	maps:
	\begin{equation*}
		\Ri_{\eps}(S)\map[\phi_1]
		\Ri^L_{5\eps\delta /3}(X)\map[\phi_2]
		\Ri^\eps_{5\delta\eps}(S)\inclusion[\phi_3]
		\Ri_{5\delta\eps}(S)\map[\phi_4]
		\Ri^L_{\delta(15\delta+2)\eps/3}(X).
	\end{equation*}
	We have shown that $\phi_2$ induces an isomorphism on $\pi_1$. As we have
	also noted that $(\phi_4\circ\phi_3\circ\phi_2)$ induces an isomorphism on
	all homotopy groups. So, we conclude first that $\phi_3$ induces an
	injective homomorphism on $\pi_1$ .

	Now, we consider the following commutative diagram:
	\begin{equation}\label{eqn:diag1}
		\begin{tikzpicture} [baseline=(current  bounding  box.center)]
			\node (k1) at (-4,0) {$\Ri_{\eps}(S)$};
			\node (k2) at (-0,0) {$\Ri^\eps_{5\delta\eps}(S)$};
			\node (k3) at (4,0) {$\Ri_{5\delta\eps}(S)$};
			\node (l2) at (0,-2) {$\sh(\Ri^\eps_{5\delta\eps}(S))$};
			\node (l3) at (4,-2) {$\sh(\Ri_{5\delta\eps}(S))$};
			\draw[rinclusion] (k1) to node[auto] {$i$} (k2);
			\draw[rinclusion] (k2) to node[auto] {$\phi_3$} (k3);
			\draw[rinclusion] (l2) to node[auto] {$j_2$} (l3);
			\draw[map] (k2) to node[auto] {$p$} (l2);
			\draw[map] (k3) to node[auto] {$\widetilde p$} (l3);
		\end{tikzpicture}
	\end{equation}
	where $i$ is contiguous to the composition $(\phi_2\circ\phi_1)$, and
	$p,\widetilde p$ are the natural (shadow) projections.

	We show that the induced map $p_*$ is an isomorphism on the fundamental
	groups. From the commutativity of the diagram \eqnref{diag1}, we note that
	$p_*$ is an injection on $\pi_1$, since ${\phi_3}_*$ is injective and
	$\widetilde{p}_*$ is also injective on $\pi_1$ by \cite{Chambers2010}. For
	surjectivity, we follow the same lifting argument presented in
	\cite{Chambers2010}.

\end{proof}

As a consequence of \lemref{shadow}, we finally present our main geometric
reconstruction result.

\begin{theorem}[Geometric Reconstruction of Planar
	Subspaces]\label{thm:planar-reconstruction} Let $X$ be a connected geodesic
	subspace of $\R^2$ with a positive convexity radius $\rho$ and a finite
	distortion $\delta$, which has the homotopy type of a finite simplicial
	complex. Let $S\subseteq\R^2$ be finite, and $\eps>0$ be such that
	\begin{equation}\label{eq:d_H(X,S)-planar}
	d_H(X,S)<\frac{\eps}{3}<\frac{\rho}{\delta(15\delta+2)}.
	\end{equation}
	Then, the shadow complex $\widetilde{X}=\sh(\Ri^\eps_{5\eps\delta}(S))$ of
	$\Ri^\eps_{5\eps\delta}(S)$ has the homotopy type of $X$. Moreover,
	\begin{equation}\label{eq:d_H(X,sh)}
	d_H(X,\widetilde{X})<\left(5\delta+\frac{1}{3}\right)\eps.
	\end{equation}
\end{theorem}

\begin{proof}
	By \lemref{shadow}, the shadow
	$\widetilde{X}=\sh(\Ri^\eps_{5\eps\delta}(S))$ and $X$  have isomorphic
	fundamental groups, via the map $p$ of diagram \eqref{eqn:diag1}. Note that,
	by assumption, both $\sh(\Ri^\eps_{5\eps\delta}(S))$ and~$X$ have a homotopy
	type of a finite wedge of circles and therefore trivial higher homotopy
	groups. By the Whitehead's theorem \cite{HATCH}, applied to the map $p$, we
	conclude that $p$ is a homotopy equivalence.

	For statement \eqref{eq:d_H(X,sh)}, we note that for any finite vertex set
	$\sigma\subseteq S$ with $\operatorname{diam}(\sigma)< 5\delta\eps$ we have
	$\sigma\subseteq\sh(\sigma)$ and $d_H(\sigma,\sh(\sigma))\leq
	\operatorname{diam}(\sigma)$.  As a consequence, $d_H(\widetilde{X},S)\leq
	5\delta\eps$. By the triangle inequality, we conclude the result.
\end{proof}

\begin{corollary}[Geometric Reconstruction of Embedded Graphs]
	\label{cor:graphs-reconstruction} Let $G$ be a finite, connected embedded
	graph in~$\R^2$. Let $b$ be the length of the shortest simple cycle of $G$,
	and let $\delta$ be its distortion. Let $S\subseteq\R^2$ be finite
	and~$\eps>0$ be such that
	\[
	 d_H(G,S)<\frac{\eps}{3}<\frac{b}{4\delta(15\delta+2)}.
	\]
	Then, the shadow of $\widetilde{G}=\sh(\Ri^\eps_{5\eps\delta}(S))$ has the
	same homotopy type as $G$ and \eqref{eq:d_H(X,sh)} holds for $X=G$ and
	$\widetilde{X}=\widetilde{G}$.
\end{corollary}

\begin{proof}
	It suffices to note that the convexity radius of $G$ is $\frac{b}{4}$ and
	apply \thmref{planar-reconstruction}. \\
\end{proof}

Based on \corref{graphs-reconstruction}, we devise \algref{graph} for
the geometric reconstruction of (planar) embedded graphs. For a demonstration,
see \figref{recon}.
\begin{algorithm}[htbp]
	\caption{Graph Reconstruction Algorithm \label{alg:graph}}
	\begin{algorithmic}[1]
		\REQUIRE Finite sample $S\subseteq\R^2$, $\eps>0, \delta$, and $b$

		\ENSURE $d_H(\widetilde{G},S)<\frac{\eps}{3}<\frac{b}{4\delta(15\delta+2)}$

		\STATE Initialize $\widetilde{G}\leftarrow\emptyset$\;

		\STATE Compute the one-skeleton of $\Ri_\eps(S)$\;

		\STATE Compute $(S,d_\eps)$\;

		\FORALL{$\{a,b,c\}\in S$}

		\IF{$d_\eps(a,b)\leq5\delta\eps$ \AND $d_\eps(b,c)\leq5\delta\eps$ \AND
			$d_\eps(c,a)\leq5\delta\eps$}

		\STATE $\widetilde{G}\leftarrow\widetilde{G}~\cup$
		\tt{CONVEX-HULL}($\{a,b,c\}$)

		\ENDIF

		\ENDFOR

		\RETURN $\widetilde G$

	\end{algorithmic}
\end{algorithm}

\section{Discussion}
In this paper, we successfully reconstruct homology/homotopy groups of
general geodesic spaces. We also reconstruct the geometry of embedded
graphs. Currently, the output of such geometric reconstruction is a thick region
around the hidden graph; see \figref{recon}. One can consider a post-processing
step to prune the output shadow $\widetilde{G}$ in order to output an embedded
graph that is isomorphic to the hidden graph~$G$.  A natural extension of our
work is to consider the geometric reconstruction of higher-dimensional
simplicial complexes. Unlike the graphs, such a space may have non-trivial
higher homotopy groups. The reconstruction result remains, therefore, an object
of future work.

On the other hand, we also note that both approaches are not performing
well when we deform $X$, e.g., by ``pinching'' a pair of points in $X$, i.e.,
deforming $X$ to bring these points~$\epsilon$--close in the extrinsic Euclidean
distance but with bounded intrinsic distance. Creating such an $\epsilon$--pinch
generally results in a  small $\wfs$ as well as large distortion of the
resulting submanifold.

Based on these considerations, we conjecture that there should be a stability
result within an appropriate class of geodesics subspaces of $\R^N$, saying that
a fixed sample $S$ satisfying assumptions of \thmref{Rips-persistence} and
\thmref{Cech-persistence}, statements \eqref{eq:Rips-persistence} and
\eqref{eq:Cech-persistence} should be valid not only for a given $X$ but also
for any $\epsilon$--close perturbation within the class. We will address this
claim in the forthcoming work.

\section*{Acknowledgements}

BTF was supported by grants CCF-1618605, DMS 1854336, and CCF-2046730 from the National Science Foundation.
SM and CW were supported by grant CCF-1618469 from the National Science
Foundation.

\bibliographystyle{ws-ijcga}
\bibliography{reconstruction}

\end{document}